\allowdisplaybreaks \setlength{\textwidth}{6.5in}
\numberwithin{equation}{section}
\newtheorem{thm}{Theorem}[section]
\newtheorem{lem}[thm]{Lemma}
\newtheorem{prop}[thm]{Proposition}
\newtheorem{cor}[thm]{Corollary}
\renewcommand{\theenumi}{\roman{enumi}}
\theoremstyle{definition}
\newtheorem{defn}[thm]{Definition}
\theoremstyle{remark}
\newtheorem{rem}[thm]{Remark}
\newcommand{\bC}{\boldsymbol{C}}
\newcommand{\bD}{\boldsymbol{D}}
\newcommand{\bF}{\boldsymbol{F}}
\newcommand{\bI}{\boldsymbol{I}}
\newcommand{\bP}{{\boldsymbol{P}}}
\newcommand{\bQ}{\boldsymbol{Q}}
\newcommand{\bq}{\boldsymbol{q}}
\newcommand{\bR}{{\boldsymbol{R}}}
\newcommand{\bT}{\boldsymbol{T}}
\newcommand{\bu}{\boldsymbol{u}}
\newcommand{\bW}{\boldsymbol{W}}
\newcommand{\bone}{\boldsymbol{1}}
\newcommand{\bzero}{\boldsymbol{0}}
\newcommand{\bpi}{\boldsymbol{\pi}}
\newcommand{\bxi}{\boldsymbol{\xi}}
\newcommand{\bXi}{\boldsymbol{\Xi}}
\newcommand{\bchi}{\boldsymbol{\chi}}
\newcommand{\bzeta}{\boldsymbol{\zeta}}
\newcommand{\PF}{Perron-Frobenius\ }
\DeclareMathOperator{\E}{E}
\DeclareMathOperator{\Oh}{O}
\begin{document}
\bibliographystyle{plainnat}

\begin{center}
\textbf{\Large Strong Laws for Urn Models with Balanced Replacement Matrices}

\bigskip

\textsc{Amites Dasgupta} and \textsc{Krishanu Maulik}\\
Stat-Math Unit\\
Indian Statistical Institute\\
203 B. T. Road\\
Kolkata 700108\\
India\\
Email: amites,krishanu@isical.ac.in

\bigskip
\bigskip

\textbf{Abstract}\\
\end{center}
We consider an urn model, whose replacement matrix has all entries nonnegative and is balanced, that is, has constant row sums. We obtain the rates of the counts of balls corresponding to each color for the strong laws to hold. The analysis requires a rearrangement of the colors in two steps. We first reduce the replacement matrix to a block upper triangular one, where the diagonal blocks are either irreducible or the scalar zero. The scalings for the color counts are then given inductively depending on the \PF eigenvalues of the irreducible diagonal blocks. In the second step of the rearrangement, the colors are further rearranged to reduce the block upper triangular replacement matrix to a canonical form. Under a further mild technical condition, we obtain the scalings and also identify the limits. We show that the limiting random variables corresponding to the counts of colors within a block are constant multiples of each other. We provide an easy-to-understand explicit formula for them as well. The model considered here contains the urn models with irreducible replacement matrix, as well as, the upper triangular one and several specific block upper triangular ones considered earlier in the literature and gives an exhaustive picture of the color counts in the general case with only possible restrictions that the replacement matrix is balanced and has nonnegative entries.

\medskip

\noindent\textbf{Key words:} Urn model, balanced triangular replacement matrix, Perron-Frobenius eigenvalue, irreducible matrix.

\medskip

\noindent\textbf{AMS 2000 Subject Classification:} Primary 60F15, 60F25, 60G42.


\pagebreak

\begin{section}{Introduction} \label{sec: intro}
Consider an urn with balls of $D$ colors. The colors will be labeled by natural numbers as $1, 2, \ldots, D$. We start with an initial configuration of balls of different colors, where count of each color is strictly positive and add up to one. Note that the term ``count'' is an abuse of notation and it need not be an integer, but can be any positive real number. The fact here and later that ``count'' may not be integers does not cause much problem, as these numbers are used to define certain selection probabilities only in the sequel. The word ``count'' allows us to use the more picturesque language of ``drawing a ball''. Let the row vector $\bC_0$, which we assume to be a probability vector with all components positive, denote the initial count of balls of each color. The composition of the urn evolves by adding balls of different colors at times $n=1, 2, 3, \ldots$ as follows. The evolution of the composition of the urn will be governed by a replacement matrix $\bR$.

Throughout this article, we shall assume that the replacement matrix $\bR = ((r_{ij}))$ is a $D \times D$ non-random balanced (that is, each row sum is same and hence, without loss of generality, one) matrix with nonnegative entries. Again note that the entries $r_{ij}$ need not be integers, but real numbers. Let $\bC_N$ denote the row vector of the counts of balls of each color after the $N$-th trial, $N= 1, 2, \ldots$. We describe the evolution of $\bC_N$ inductively. At the $N$-th trial, a ball is drawn (or a color is selected) at random from the urn with the current composition $\bC_{N-1}$, so that the $i$-th color appears with probability $C_{N-1, i}/N$, $ i=1, \ldots, D$.  If the $i$-th color appears, then, for $j=1, \ldots, D$, $r_{ij}$ balls of $j$-th color are added to the urn before the next draw, together with the drawn ball, that is $C_{N, j} = C_{N-1, j} + r_{ij}$, for $j=1, \ldots, D$, when $i$-th color appears in the $N$-th draw. It is of interest to study the stochastic behavior of $\bC_N$ as $N \to \infty$.

If the replacement matrix is balanced with the common row sum $1$ and has nonnegative entries, then it can be viewed as a transition matrix of a Markov chain on a finite state space of size $D$ and it will be meaningful to talk about the reducibility or irreducibility of the matrix. However, the notion of irreducibility can easily be extended to any matrix with all entries nonnegative, see, for example, Chapter~1.3 of \cite{seneta:2006book}.
\begin{defn}\label{def: irred}
A $D\times D$ matrix $\bR$ with all entries nonnegative is called \textit{irreducible} if for each $1\le i, j \le D$, there exists $n\equiv n(i,j)$ such that the $(i,j)$-th entry of $\bR^n$ is strictly positive. A matrix, which is not irreducible, will be called \textit{reducible}.
\end{defn}
\noindent Note that, for an irreducible matrix, there may not exist a common $n$ such that $\bR^n$ has all entries strictly positive. As an example, for the matrix $\left(\substack{0\,1\\1\,0}\right)$, the $(1,1)$-th entry of all odd powers and $(1,2)$-th entry of all even powers will be zero. Any irreducible matrix has a positive eigenvalue of algebraic multiplicity one, which is larger than or equal to all other eigenvalues in modulus. Such an eigenvalue is called the \textit{\PF} eigenvalue of the irreducible matrix. Since no other eigenvalue equals the \PF eigenvalue, which is real and positive, the \PF eigenvalue is strictly larger than the real part of any other eigenvalue. The left and the right eigenvectors corresponding to the \PF eigenvalue have all entries strictly positive. The \PF eigenvalue will be contained in the interval formed by the smallest and the largest row sum. The \PF eigenvalue will be in the interior of the interval unless the matrix is balanced. For a discussion on the \PF eigenvalues of irreducible matrices, we refer to Chapter~1.4 of \cite{seneta:2006book}.

In case the replacement matrix $\bR$ is irreducible, its \PF eigenvalue will be $1$, as it is balanced with common row sum $1$. Let $\bpi_R$ be the left eigenvector of $\bR$, normalized so that the sum of the coordinates is $1$, corresponding to the \PF eigenvalue. Then $\bpi_\bR$ is also the unique stationary distribution satisfying $\bpi_\bR \bR = \bpi_\bR$ and will have all coordinates strictly positive. Then \citep[see, for example,][]{gouet:1997} $\bC_N / (N+1) \to \bpi_R\ \text{almost surely}$. This strong law for the color counts in the irreducible case has been studied in more general setups and further strong/weak laws, central and functional central limit theorems for linear combinations of color counts are well known. We refer to \cite{bai:hu:1999} for the martingale approach and to \cite{janson:2004} for the branching process approach; both papers also contain detailed references to the literature.

However, when the replacement matrix is not irreducible or balanced, the balls of different colors may increase at different rates and strong/weak limits for $\bC_n$ are not known in full generality. \cite{flajolet:dumas:puyhaubert:2006, janson:2006, bose:dasgupta:maulik:2009a, bose:dasgupta:maulik:2009b} and the references in these papers contain some results in these directions which are relevant in this context. The case of upper triangular $\bR$ has been studied in these papers, sometimes under suitable assumptions.

Actually some strong laws are also available for more general case of reducible $\bR$. For example, the case of balanced block triangular $\bR$ with irreducible diagonal blocks have been identified in \cite{gouet:1997} as an important class among the reducible ones. The assumption of balanced rows leads to convenient application of martingale techniques. More precisely, let us assume that $\bR$ is balanced and upper block triangular with $K+1$ blocks, like
$$\begin{pmatrix}
\bQ_1 & \cdots & \cdots & \cdots & \cdots \\
\bzero & \bQ_2 &\cdots & \cdots & \cdots \\
\vdots & \ddots  & \ddots & \cdots & \cdots \\
\bzero & \cdots & \bzero & \bQ_K & \cdots \\
\bzero & \cdots & \bzero & \bzero & \bP
\end{pmatrix}$$
where $\bQ_1, \ldots, \bQ_K$ are irreducible (but not necessarily balanced) matrices with Perron-Frobenius eigenvalue less than 1 and $\bP$ is irreducible and obviously balanced with \PF eigenvalue 1. Let $\bpi_\bP$ satisfy $\bpi_\bP = \bpi_\bP \bP$. Then Proposition~4.3 of \cite{gouet:1997} says that for such an $\bR$, $\bC_N/(N+1) \rightarrow (\bzero, \bzero, \ldots , \bzero, \bpi_\bP)$ almost surely, that is the vectors of color counts corresponding to the first $K$ blocks are killed if scaled by $N$. This raises the question whether the rates of the vectors of color counts corresponding to the first $K$ blocks can be identified.

\cite{janson:2006} has studied a two color urn model with reducible replacement matrix. He took the matrix to be lower triangular, but did not put any further restrictions. The matrix was allowed to have different row sums, as well as, possibly negative entries. The asymptotic behavior of the color counts were discussed in details using branching process techniques.

The above two urn models inspired \cite{bose:dasgupta:maulik:2009b} to consider the urn model with triangular replacement matrix and, under some technical assumptions, the rates of individual color counts were identified. It is clear that the triangular model, where one deals with blocks of size one, is a special case of block triangular models with irreducible diagonal blocks. Section~1.2 of \cite{seneta:2006book} sketches an arrangement which reduces a matrix with nonnegative entries to a block lower triangular one. By further rearranging the states in a reverse order, the matrix can be made into a block upper triangular one. In fact, any balanced replacement matrix can be reduced to a block upper triangular one where the diagonal blocks are either irreducible or the scalar zero through a rearrangement of the colors. It should be stressed that the result is true for any matrix with nonnegative entries and the equality of row sums is not important. The states can be identified with colors. Note that any rearrangement of colors is same as a similarity transform by a permutation matrix. Here the nonzero irreducible diagonal block need not be balanced. The rearrangement is quite simple in nature, but we have not come across any detailed ready reference in the literature. So we quickly outline a proof of the rearrangement in the following lemma.
\begin{lem}\label{lem: rearrange}
Any matrix $\bR$, with all entries nonnegative, is similar to a block upper triangular matrix, whose diagonal blocks are either irreducible or the scalar zero, via a permutation matrix.
\end{lem}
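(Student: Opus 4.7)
The plan is to carry out the standard Markov chain ``communicating class'' decomposition, adapted to allow singleton classes with a vanishing diagonal entry. On the index set $\{1,\ldots,D\}$, write $i\to j$ if $(\bR^n)_{ij}>0$ for some $n\ge 1$, and declare $i\sim j$ when $i=j$, or both $i\to j$ and $j\to i$. The nonnegativity of $\bR$ together with the entrywise inequality $(\bR^{m+n})_{ik}\ge (\bR^m)_{ij}(\bR^n)_{jk}$ makes $\to$ transitive, so $\sim$ is an equivalence relation; denote its classes by $C_1,\ldots,C_m$.

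First I would check that restricting $\bR$ to each class produces a block of the required type. For a multi-element class $C$ and any $i,j\in C$, pick a path $i=k_0,k_1,\ldots,k_n=j$ along which each $r_{k_{\ell-1},k_\ell}>0$; this witnesses $(\bR^n)_{ij}>0$. Since $j\to i$ also, every intermediate vertex $k_\ell$ satisfies $i\to k_\ell$ (first half of the path) and $k_\ell\to i$ (rest of the path followed by a return path from $j$ to $i$), hence $k_\ell\sim i$ and $k_\ell\in C$. The path therefore lies entirely in $C$, so $((\bR|_C)^n)_{ij}>0$, which proves $\bR|_C$ irreducible in the sense of Definition~\ref{def: irred}. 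A singleton class $\{i\}$ satisfies either $i\to i$, forcing $r_{ii}>0$ and making the $1\times 1$ block irreducible, or $i\not\to i$, which forces $r_{ii}=0$ and makes the block the scalar zero.

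On the quotient I would then introduce the partial order $[i]\preceq[j]$ iff $i=j$ or $i\to j$. Transitivity of $\to$ makes this well-defined on classes, and antisymmetry follows because $[i]\ne[j]$ with both $i\to j$ and $j\to i$ would give $i\sim j$. Any partial order on a finite set extends to a total order, so I relabel the classes so that $C_a\preceq C_b$ implies $a\le b$. Let $\bP$ be the permutation matrix that lists the indices of $C_1$ first, then those of $C_2$, and so on. For $i\in C_a$ and $j\in C_b$ with $a>b$, the ordering forces $i\not\to j$, hence in particular $r_{ij}=0$; thus $\bP\bR\bP^{\top}$ is block upper triangular with diagonal blocks $\bR|_{C_1},\ldots,\bR|_{C_m}$, each of which is either irreducible or the scalar zero.

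The only place that needs any thought is the intra-class argument that paths witnessing communication remain inside the class; everything else amounts to bookkeeping on a finite poset and its linear extension.
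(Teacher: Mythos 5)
Your proof is correct and follows the same route as the paper: define communicating classes, verify each class restricts to an irreducible or zero block, order the classes by a partial order on the quotient, and permute accordingly. The only cosmetic difference is that you build reflexivity into $\sim$ directly (so every index has a nonempty class), whereas the paper first allows empty classes for ``lone colors'' and then assigns them singleton classes; you also spell out the path-stays-in-class and singleton-class arguments that the paper only asserts, which is a welcome bit of extra rigor but not a different method.
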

\begin{proof}
As already explained, we shall explain the proof through a rearrangement of colors, which is equivalent to a similarity transform through a permutation matrix.

We shall say a color $i$ leads to a color $j$, if for some $n$, the $(i,j)$-th entry of $\bR^n$ is positive. The colors $i$ and $j$ are said to communicate if both $i$ leads to $j$ and conversely. The class of $i$ is defined as $\mathcal C_i = \{j: i \text{ communicates with } j\}$. Note that either $\mathcal C_i$ is empty or contains $i$. The colors with empty classes will be called lone colors. For two different colors $i$ and $j$, either their classes coincide or they are disjoint. Further note that the submatrices of $\bR$ corresponding to each nonempty class is irreducible. Next, make singleton classes of each lone color. The collection of all distinct classes (including the singleton classes of the lone color) forms a partition of the collection of all colors and they will form the required blocks after a permutation. A class $\mathcal C$ is said to lead to another class $\mathcal C'$ if some color in $\mathcal C$ leads to another color in $\mathcal C'$ and we shall write $\mathcal C \preceq \mathcal C'$. It is easy to see that ``$\preceq$'' is a well-defined, transitive and anti-symmetric relation and hence is a partial order on the collection of distinct classes. So the collection of distinct classes can be rearranged in a non-decreasing order. The corresponding rearrangement of colors will have the replacement matrix in the required block upper triangular form with zero or irreducible diagonal blocks. The diagonal blocks corresponding to nonempty classes of some color will give the irreducible ones, while the lone colors will give the scalar zero diagonal blocks.
\end{proof}

It should be noted that the eigenvalues together with multiplicities remain unchanged under similarity transforms. Also, as the similarity transform is done by a permutation matrix, this will result in the eigenvectors being rearranged correspondingly. In this article, without loss of generality, we only consider the case of balanced block triangular $\bR$ with scalar zero or irreducible diagonal blocks. Note that, for irreducible replacement matrix, we have only one block. A special case of two irreducible diagonal blocks, both of which are balanced, was treated in \cite{bose:dasgupta:maulik:2009a}. The strong law there is given in Proposition~4.2(iii) and the proof follows from the proof of Theorem~3.1(iv) of the same article. The proof essentially used the strong law for the irreducible case mentioned earlier along with the introduction of a stopping time. However, when the irreducible diagonal blocks are not balanced, we require new techniques to handle the strong convergence of the vectors of color counts corresponding to the diagonal blocks. This article presents these new techniques along with a simplification of earlier proofs using Kronecker's lemma. The limits are identified later in the article after a further rearrangement and an extra technical assumption is made. The limits involve suitably normalized left and right eigenvectors of the appropriate irreducible diagonal blocks corresponding to their \PF eigenvalues. The initial zero diagonal blocks identified after this rearrangement give a different type of limits.

As a consequence of the rearrangement mentioned in Lemma~\ref{lem: rearrange}, the $D\times D$ balanced, block upper triangular replacement matrix $\bR$ with nonnegative entries is assumed to have $K+1$ blocks, where the diagonal blocks are either irreducible or the scalar zero and none but last of which need to be balanced. The $k$-th block contains $d_k$ many colors with $d_1 + \cdots + d_{K+1} = D$. We shall denote the blocks by $\bQ_{kl}$, where $k, l = 1, 2, \ldots, K+1$. Thus, $\bQ_{kl}$ will be of dimension $d_k \times d_l$ and $\bQ_{kl}=0$, whenever $k>l$. We shall generally denote the diagonal block $\bQ_{kk}$ by $\bQ_k$.

Let $\bchi_N$ be the row vector called the incidence vector whose $m$-th entry will be $1$ and all other entries $0$, if $m$-th color is drawn at the $N$-th draw. The subvectors of $\bC_N$ and $\bchi_N$ corresponding to $k$-th block of colors will be denoted by $\bC_N^{(k)}$ and $\bchi_N^{(k)}$ respectively. Let $\mathcal{F}_N$ denote the $\sigma$-field generated by the collection of random vectors $\{\bchi_1, \ldots, \bchi_N\}$. We have the following evolution equation:
\begin{equation} \label{eq: evolution}
\bC_{N+1} = \bC_N + \bchi_{N+1} \bR.
\end{equation}
We shall show that the rates of growth of the color count subvector will be constant in each block and the rate for the $k$-th block will be of the form $N^{\alpha_k} \log^{\beta_k}N$.
\begin{defn} \label{def: rate pair}
If the color count subvector corresponding to the $k$-th block grows at the rate $N^{\alpha_k} \log^{\beta_k}N$, that is, $\bC_N^{(k)}/(N^{\alpha_k} \log^{\beta_k}N)$ converges almost surely and in $L^2$, then we shall denote the rate by the \textit{rate pair} $(\alpha_k,\beta_k)$.

The ordering of the rates of growth induces an ordering on the rate pairs, which is the lexicographical ordering, that is, the color count subvector of the $k$-th block grows at a rate faster than that of the $k^\prime$-th block if and only if either $\alpha_k>\alpha_{k^\prime}$ or $\alpha_k=\alpha_{k^\prime}$ and $\beta_k>\beta_{k^\prime}$.
\end{defn}

One of the goals of this article is to obtain the rate pairs of the count subvectors corresponding to all the blocks, which we do in Theorem~\ref{thm: algo}. The rate pairs depend on the \PF eigenvalues of the diagonal block matrices, whenever it is irreducible. This introduces another important notion of this article.
\begin{defn}\label{def: char}
For a square matrix $\bQ$ with nonnegative entries, which is either irreducible or zero, we define its \textit{character} $\mu$ as the \PF eigenvalue, if $\bQ$ is irreducible, and as $0$, if $\bQ=\bzero$.

For an upper triangular matrix $\bR$ formed by nonnegative entries with $K+1$ diagonal blocks $\{\bQ_k\}_{1\le k\le K+1}$, which are either irreducible or zero matrices, the \textit{character} of the $k$-th block will be denoted by $\mu_k$.
\end{defn}

We shall show that the rate pair of the first block $(\alpha_1,\beta_1)=(\mu_1,0)$. The rate pairs of the later blocks will be defined inductively. The rate pair of the $k$-th block will be determined by the (lexicographically) largest among the rate pairs $(\alpha_m,\beta_m)$ with $m=1,\ldots,k-1$ satisfying $\bQ_{mk}\neq\bzero$. If the largest such pair is denoted by $(\alpha,\beta)$, then $\alpha_k = \max\{\alpha,\mu_k\}$ and $\beta_k$ will be $\beta$, $\beta+1$ or $0$ according as $\alpha$ is greater than, equal to or less than $\mu_k$. This shows the crucial role played by the character in determining the rates.

In Section~\ref{sec: auxiliary}, we bring in some notations and prove some results which are useful for obtaining the rates of growth of the color counts. Using these results, we prove the rates, as defined above, in Section~\ref{sec: algo}. We introduce further notions, the rearrangement to the increasing order and the assumption~\eqref{assmp} in Section~\ref{sec: notations}. Finally, in Section~\ref{sec: main}, we identify the limits for the replacement matrix in the increasing order under the extra technical assumption~\eqref{assmp}. Suitably normalized left and right eigenvectors corresponding to the \PF eigenvalues of the irreducible diagonal blocks play an important role in identifying the limits. Thus, we obtain the rate of the color count subvectors for all urn models with only possible restrictions of nonnegativity of the entries and the balanced condition on the matrix. We identify the limits as well, but under the extra technical assumption~\eqref{assmp}. In the process, we identify the very important role played by the characters of all the diagonal blocks and suitably normalized left and right eigenvectors of certain irreducible diagonal blocks corresponding to their \PF eigenvalues.
\end{section}

\begin{section}{Notations and some auxiliary results} \label{sec: auxiliary}
We begin this section by recalling the notion of Jordan canonical form of a matrix. We need to introduce the square matrix $\bF$ for that purpose. The order of the matrix will be clear from the context. The matrix $\bF$ will have all entries zero except the ones in the diagonal just above the main diagonal, namely,
$$f_{ij} =
\begin{cases}
  1, &\text{if $j=i+1$,}\\
  0, &\text{otherwise.}
\end{cases}$$
If the order of the matrix is $1$, then the corresponding scalar is defined as $0$. The matrix $\bF$ is nilpotent. In particular, if $\bF$ has order $d$, then $\bF^d = \bzero$. Further, for any $1 \le i < d$, $\bF^i$ has all entries zero except the $i$-th diagonal above the main one, which has all entries one. If $\nu$ is an eigenvalue of a matrix $\bR$, then, define the Jordan block corresponding to $\nu$ as $\bD_\nu = \nu \bI + \bF$. We also have a matrix $\bXi_\nu$ of full column rank, whose columns are Jordan vectors corresponding to $\nu$. In fact, the first column of $\bXi_\nu$ is a right eigenvector of $\bR$ corresponding to $\nu$ and $\bXi_\nu$ satisfies $\bR \bXi_\nu = \bXi_\nu \bD_\nu$. In the Jordan decomposition of $\bR$, given by $\bR \bXi = \bD \bXi$, the matrix $\bD$ is block diagonal with diagonal blocks given by $\bD_\nu$ corresponding to some eigenvalue $\nu$. The total number of blocks (possibly of different dimensions), that an eigenvalue $\nu$ contributes to $\bD$ equals its geometric multiplicity and the sum of the dimensions of the blocks corresponding to $\nu$ equals its algebraic multiplicity. The matrix $\bXi$ is obtained by concatenating the matrices $\bXi_\nu$ in the corresponding order.

If $z$ is a non-zero complex number, we denote by $\bT_z$ an upper triangular matrix, which has $(i,j)$-th entry is $z^{-(j-i+1)}$, for $j\ge i$. As $\bF$ is a nilpotent matrix, we have
$$\bT_z = \frac1z \left[\bI + \sum_{i=1}^\infty \left( \frac1z \bF \right)^i \right] = \frac1z \left(\bI - \frac1z \bF\right)^{-1} = (zI-\bF)^{-1}.$$ Now, if $\lambda$ is a positive number larger than the absolute value of any eigenvalue of a matrix $\bR$, then $(\lambda \bI - \bR)$ is invertible. If $\nu$ is an eigenvalue of $\bR$ with the corresponding Jordan decomposition $\bR \bXi_\nu = \bXi_\nu \bD_\nu = \bXi_\nu (\nu \bI + \bF)$, then we have $(\lambda \bI - \bR) \bXi_\nu = \bXi_\nu ((\lambda-\nu) \bI - \bF) = \bXi_\nu \bT_{\lambda-\nu}^{-1}$ and hence
\begin{equation} \label{eq: T}
\bXi_\nu \bT_{\lambda-\nu} = (\lambda \bI - \bR)^{-1} \bXi_\nu.
\end{equation}

We further use the following notation, defined for all complex numbers $z$, except for the negative integers,
$$\Pi_N (z) = \prod_{n=0}^{N-1} \left( 1 + \frac{z}{n+1} \right),$$ which satisfies Euler's formula for the Gamma function,
\begin{equation} \label{eq: Euler}
\Pi_N (z) \sim N^z / \Gamma (z+1).
\end{equation}

For a vector $\bxi$, the vectors $|\bxi|^2$ and $\bxi^2$ will denote the vectors whose entries are squares of the moduli and squares of the entries of the vector $\bxi$ respectively. For two real vectors $\bxi$ and $\bzeta$ of same dimension, inequalities like $\bxi \le \bzeta$ will correspond to the inequalities for each coordinate.

For a complex number $z$, we shall denote its real and imaginary parts by $\Re z$ and $\Im z$ respectively.

We are now ready to do the analysis for obtaining the rates of the color counts for each block. The presence of diagonal blocks as matrices with possibly complex eigenvalues introduces additional complications compared to the triangular case. Also in the block triangular case, it is not wise to study the individual color counts directly. We consider the linear combinations of color counts in each block with respect to eigenvectors and Jordan vectors. Before obtaining the rates for each color count, we state some auxiliary results in this section, which will be useful later in proving the rates. The first result concerns a simple observation regarding (possible complex valued) martingales, which follows from two simple applications of Kronecker's lemma.
\begin{lem}\label{lem: mart}
Let $\{M_N\}$ be a $($possibly complex valued\,$)$ martingale with the martingale difference sequence $\Delta M_N = M_{N+1} - M_N$ satisfying $\E[|\Delta M_N|^2] = \Oh(c_N)$ for some sequence of positive numbers $\{c_N\}$. If for some other sequence of positive numbers $\{a_N\}$, which diverges to infinity, we have $\sum_{n=1}^\infty (c_{n-1}/a_n^2) < \infty$, then $M_N/a_N\to0$ almost surely, as well as, in $L^2$.
\end{lem}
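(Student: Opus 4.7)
The plan is to deduce both conclusions by two separate applications of Kronecker's lemma, each combined with the orthogonality of martingale increments.

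For the $L^2$ conclusion, I would use orthogonality to write
\begin{equation*}
\E[|M_N - M_0|^2] = \sum_{n=0}^{N-1} \E[|\Delta M_n|^2] = \Oh\!\left(\sum_{n=0}^{N-1} c_n\right).
\end{equation*}
Applying Kronecker's lemma to the convergent series $\sum_{n\ge 1} c_{n-1}/a_n^2 = \sum_{n \ge 0} c_n/a_{n+1}^2$ with weights $a_{n+1}^2$ gives $(1/a_N^2)\sum_{n=0}^{N-1} c_n \to 0$. Since $a_N\to\infty$ absorbs the constant contribution from $M_0$, this forces $\E[|M_N|^2]/a_N^2 \to 0$ and hence $M_N/a_N \to 0$ in $L^2$.

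For the almost sure convergence, I would introduce the auxiliary (possibly complex-valued) martingale
\begin{equation*}
S_N := \sum_{n=0}^{N-1} \frac{\Delta M_n}{a_{n+1}},
\end{equation*}
which is indeed a martingale because it is a deterministic-coefficient linear combination of the $\Delta M_n$. Another use of the orthogonality of its increments yields
\begin{equation*}
\E[|S_N|^2] = \sum_{n=0}^{N-1} \frac{\E[|\Delta M_n|^2]}{a_{n+1}^2} = \Oh\!\left(\sum_{n=0}^{N-1} \frac{c_n}{a_{n+1}^2}\right),
\end{equation*}
which is bounded in $N$ by hypothesis. Therefore $\{S_N\}$ is $L^2$-bounded and converges almost surely (and in $L^2$) to some $S_\infty$ by the martingale convergence theorem. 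On the full-measure set on which these sums converge, a second, pointwise, application of Kronecker's lemma (to the sequence $\{\Delta M_n(\omega)\}$ with weights $a_{n+1}$) then delivers $(1/a_N)\sum_{n=0}^{N-1}\Delta M_n(\omega) = (M_N - M_0)(\omega)/a_N \to 0$, and consequently $M_N/a_N \to 0$ almost surely.

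The only mild technical wrinkle is that the standard form of Kronecker's lemma requires the weight sequence to be nondecreasing to infinity. This is not really an obstacle: in every later application of the present lemma the weights $a_n$ arise as $N^{\alpha_k}(\log N)^{\beta_k}$, which are eventually monotone; and in any event one may replace $a_n$ by its running maximum without affecting the hypothesis $\sum c_{n-1}/a_n^2 < \infty$. The essential content of the argument is thus the two invocations of Kronecker's lemma, both powered by the second-moment bound on $\Delta M_n$ together with martingale orthogonality.
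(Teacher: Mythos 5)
Your argument is correct and is essentially the paper's: form the weighted martingale $\sum_{n<N}\Delta M_n/a_{n+1}$, show it is $L^2$-bounded (hence a.s.\ convergent) using orthogonality and the summability hypothesis, and apply Kronecker's lemma once for the almost sure conclusion and once more (to $\sum c_{n-1}/a_n^2$) for the $L^2$ conclusion. In fact your indexing ($\Delta M_n/a_{n+1}$, matching the hypothesis $\sum c_{n}/a_{n+1}^2<\infty$) is cleaner than the paper's, which writes $\Delta M_n/a_n$ and leaves a harmless off-by-one; the paper also routes the complex case through real and imaginary parts before invoking Kronecker, which you elide but which is standard.

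One small caution on your closing remark: replacing $a_n$ by its running maximum $\tilde a_n=\max_{m\le n}a_m$ does preserve $\sum c_{n-1}/\tilde a_n^2<\infty$ and gives $M_N/\tilde a_N\to 0$, but since $\tilde a_N\ge a_N$ this does \emph{not} by itself yield $M_N/a_N\to 0$ when $a_N$ is allowed to oscillate (the ratio $\tilde a_N/a_N$ need not be bounded). The correct resolution, which you also give, is simply that every $a_N$ used downstream is of the form $N^{\alpha}(\log N)^{\beta}$ and hence eventually increasing, so the monotone form of Kronecker's lemma applies after discarding finitely many terms; the running-max device should be dropped.
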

\begin{proof}
Observe that $\E[|\Delta M_N/a_N|^2] = \Oh(c_N/a_N^2)$, which is summable. Thus, $\sum_{n=0}^{N-1} \Delta M_n/a_n$ forms an $L^2$-bounded martingale, which converges almost surely. Then both the real and the imaginary parts of this martingale will also converge almost surely. Further, as $a_N\to\infty$, using Kronecker's lemma, both $\Re M_N / a_N$ and $\Im M_N / a_N$ converge to $0$ almost surely. Thus $M_N/a_N\to0$ almost surely.

Further, since $a_N$ diverges to infinity and $\{c_{N-1}/a_N^2\}$ is summable, we have, again using Kronecker's lemma, $\sum_{n=1}^N c_{n-1} / a_N^2$ converges to zero. Further, $\E[|M_N|^2]=M_0^2+\sum_{n=0}^{N-1} \E[|\Delta M_n|^2] = \Oh\left(\sum_{n=0}^{N-1} c_n\right)$. Thus, $\E[|M_N|^2]/a_N^2\to0$.
\end{proof}

For the second result, we consider a block upper triangular replacement matrix with three blocks.
\begin{lem} \label{lem: three blk}
Consider an urn model with the replacement matrix
\begin{equation} \label{model: three block}
\bR =
\begin{pmatrix}
\bQ_{1} &\bQ_{12} &\bq_{13}\\
\bzero &\bQ_2 &\bq_{23}\\
\bzero &\bzero &1
\end{pmatrix},
\end{equation}
which is balanced and has all entries nonnegative, with $d_1$, $d_2$ and $1$ colors in three blocks respectively. None of the submatrices need to be balanced and, except for $\bQ_2$, none of the submatrices need to be irreducible either. However, $\bQ_2$ is assumed to be irreducible with the \PF eigenvalue $\mu$ and the corresponding right eigenvector $\bzeta$. Let $\nu$ be another eigenvalue of $\bQ_2$ with Jordan decomposition given by $\bQ_2 \bXi_\nu = \bXi_\nu \bD_\nu$. The rows of $\bQ_{12}$ are $\{\bq_l\}_{1\le l\le d_1}$, some of which may be the zero row vectors. The color count vector and its subvectors are denoted as before.

Also assume that there exists $\alpha \ge \mu$ and an integer $\beta \ge 0$, such that, for all $l=1,\ldots,d_1$, satisfying $\bq_l\neq\bzero$, we have,
\begin{equation} \label{eq: conv assmpn}
\frac{C_{N,l}}{N^\alpha \log^\beta N} \to u_l \quad \text{almost surely and in $L^2$},
\end{equation}
where $u_l$ is nonnegative, but can be random. Further assume that
$$\bC_N^{(2)} \bzeta / (N^\alpha \log^\beta N) \quad \text{converges almost surely and in $L^2$}$$ to a nondegenerate random variable. Then
\begin{equation} \label{eq: three blk conv}
\frac{\bC_N^{(2)}}{N^\alpha \log^\beta N} \bXi_\nu \to \sum_{\substack{1\le l\le d_1\\l: \bq_l\neq\bzero}} u_l \bq_l \bXi_\nu \bT_{\alpha-\nu} \quad \text{almost surely and in $L^2$}.
\end{equation}
\end{lem}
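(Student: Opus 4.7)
The plan is to write $V_N := \bC_N^{(2)} \bXi_\nu$ and exploit the identity $\bQ_2 \bXi_\nu = \bXi_\nu \bD_\nu$ (with $\bD_\nu = \nu \bI + \bF$) to convert the block-$2$ piece of the evolution equation~\eqref{eq: evolution} into a vector recursion
$$V_{N+1} = V_N\left(\bI + \frac{\bD_\nu}{N+1}\right) + \frac{\bC_N^{(1)} \bQ_{12} \bXi_\nu}{N+1} + \Delta M_{N+1},$$
where $\Delta M_{N+1} := (\bchi_{N+1}^{(1)} - \bC_N^{(1)}/(N+1))\bQ_{12}\bXi_\nu + (\bchi_{N+1}^{(2)} - \bC_N^{(2)}/(N+1))\bXi_\nu \bD_\nu$ is a martingale difference. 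Introducing $\bW_N := \prod_{n=0}^{N-1}(\bI + \bD_\nu/(n+1))$ (invertible for all but finitely many $N$) and using that $\bD_\nu$ commutes with every factor, a telescoping argument gives
$$V_N = V_0 \bW_N + \left[\sum_{n=0}^{N-1} \frac{\bC_n^{(1)} \bQ_{12} \bXi_\nu}{n+1} \bW_{n+1}^{-1}\right]\bW_N + \tilde M_N \bW_N, \qquad \tilde M_N := \sum_{n=0}^{N-1} \Delta M_{n+1} \bW_{n+1}^{-1}.$$

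The next step is the asymptotic analysis of $\bW_N$. Factoring $\bI + \bD_\nu/(n+1) = (1+\nu/(n+1))(\bI + \bF/(n+1+\nu))$ gives $\bW_N = \Pi_N(\nu)\prod_{n=0}^{N-1}(\bI + \bF/(n+1+\nu))$; expanding the nilpotent product and using $\bF^d = \bzero$ yields the leading behaviour $\sum_{k=0}^{d-1}((\log N)^k/k!)\bF^k$ up to lower-order terms in $\log N$. Combined with~\eqref{eq: Euler}, this produces $\|\bW_N\| = \Oh(N^{\Re\nu}(\log N)^{d-1})$ and $\|\bW_N^{-1}\| = \Oh(N^{-\Re\nu}(\log N)^{d-1})$. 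Since $\alpha \ge \mu > \Re\nu$ (the strict Perron-Frobenius inequality applies because $\nu$ is another eigenvalue of $\bQ_2$), the initial contribution $V_0 \bW_N/(N^\alpha \log^\beta N)$ vanishes in the limit.

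For the drift, substitute $\bC_n^{(1)} \bQ_{12} \bXi_\nu \sim n^\alpha \log^\beta n\,\tilde{\bv}_0$, where $\tilde{\bv}_0 := \sum_{l:\bq_l \ne \bzero} u_l \bq_l \bXi_\nu$ (rows with $\bq_l = \bzero$ contributing nothing), together with the estimate $\bW_{n+1}^{-1}\bW_N \sim (N/n)^\nu \sum_{k=0}^{d-1} (\log(N/n))^k \bF^k/k!$ from the previous step. The drift reduces to a finite sum of Riemann-type sums $\sum_n n^{\alpha-\nu-1}\log^\beta n\,(\log(N/n))^k$; the substitution $n = Nt$ and dominated convergence (justified because $\Re(\alpha-\nu) > 0$) yield
$$\frac{1}{N^{\alpha-\nu}\log^\beta N}\sum_{n=1}^{N-1} n^{\alpha-\nu-1}\log^\beta n\,(\log(N/n))^k \to \int_0^1 t^{\alpha-\nu-1}(-\log t)^k\,dt = \frac{k!}{(\alpha-\nu)^{k+1}}.$$
Summing over $k$, the drift divided by $N^\alpha \log^\beta N$ converges a.s.\ and in $L^2$ to $\tilde{\bv}_0 \sum_{k=0}^{d-1} \bF^k/(\alpha-\nu)^{k+1} = \tilde{\bv}_0 \bT_{\alpha-\nu}$. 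For the martingale contribution, the hypothesis on $\bC_N^{(2)}\bzeta/(N^\alpha \log^\beta N)$ together with the strict positivity of $\bzeta$ yields $\E[|\bC_N^{(2)}|_1] = \Oh(N^\alpha \log^\beta N)$, hence $\E[|\Delta M_{N+1}|^2] = \Oh(N^{\alpha-1}\log^\beta N)$. Applying Lemma~\ref{lem: mart} to $\tilde M_N$ with $a_N := N^{\alpha-\Re\nu}(\log N)^{\beta-(d-1)}$, chosen so that $a_N \|\bW_N\|/(N^\alpha \log^\beta N) = \Oh(1)$, shows $\tilde M_N/a_N \to 0$ because the series $\sum n^{-\alpha-1}(\log n)^{4(d-1)-\beta}$ converges whenever $\alpha > 0$; multiplying by $\bW_N$ then annihilates the martingale contribution.

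The main obstacle is the matrix bookkeeping: carefully tracking the nilpotent $\bF$ through $\bW_N$, $\bW_{n+1}^{-1}\bW_N$, and the Euler-type asymptotics, and rigorously justifying the dominated-convergence step, so that the combined polynomial and logarithmic factors collapse to the closed form $\sum_{k \ge 0} \bF^k/(\alpha-\nu)^{k+1} = \bT_{\alpha-\nu}$.
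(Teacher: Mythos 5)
Your proof is correct but takes a genuinely different route from the paper. The paper processes the Jordan columns $\bxi_1,\ldots,\bxi_t$ one at a time: for each $i$ it forms a scalar martingale $M_N = \bC_N^{(2)}\bxi_i/\Pi_N(\nu) - \sum_n(\cdots)/((n+1)\Pi_{n+1}(\nu))$ built from the scalar Euler product $\Pi_N(\nu)$, applies Lemma~\ref{lem: mart} once per column, and passes from column $i-1$ to column $i$ by induction, so that the geometric factors $(\alpha-\nu)^{-1},(\alpha-\nu)^{-2},\ldots$ accumulate one at a time into the entries of $\bT_{\alpha-\nu}$. You instead keep the full matrix recursion for $V_N=\bC_N^{(2)}\bXi_\nu$ and introduce the matrix Euler product $\bW_N=\prod_{n=0}^{N-1}(\bI+\bD_\nu/(n+1))$, handling the nilpotent shift $\bF$ all at once through the factoring $\bI+\bD_\nu/(n+1)=(1+\nu/(n+1))(\bI+\bF/(n+1+\nu))$. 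Your telescoping identity for $V_N$ is correct (note $\bW_0=\bI$), your asymptotics $\|\bW_N\|=\Oh(N^{\Re\nu}\log^{d-1}N)$ and $\|\bW_N^{-1}\|=\Oh(N^{-\Re\nu}\log^{d-1}N)$ are right, the martingale bound $\E[\|\Delta M_{N+1}\|^2]=\Oh(N^{\alpha-1}\log^\beta N)$ follows exactly as in the paper from $\bone\le c\bzeta$, and the integral identity $\int_0^1 t^{\alpha-\nu-1}(-\log t)^k\,dt = k!/(\alpha-\nu)^{k+1}$ does collapse the drift into $\tilde\bv_0\bT_{\alpha-\nu}$, which matches~\eqref{eq: three blk conv}.

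What the matrix route costs: your drift analysis needs two-variable asymptotics of $\bW_{n+1}^{-1}\bW_N$ (and a justification that the small-$n$ region, where that approximation fails, contributes only $\Oh(N^{\Re\nu}\log^{2(d-1)}N)=o(N^\alpha\log^\beta N)$), followed by a Riemann-sum passage to the limit; this is noticeably heavier bookkeeping than the paper's per-column Ces\`aro step, which is a plain scalar averaging argument applied $t$ times. In compensation you avoid the induction over $i$ and get the full vector limit in one shot. Both proofs leave the same detail informal — converting the assumed a.s.\ and $L^2$ convergence of the normalized integrands into a.s.\ and $L^2$ convergence of the normalized random Ces\`aro/Riemann sum — so you are not missing anything the paper does not also gloss over. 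One small inaccuracy: your parenthetical ``invertible for all but finitely many $N$'' is not quite right — if $\nu$ were a negative integer then $\bW_N$ would be singular for \emph{all} large $N$ (and, symmetrically, $\Pi_N(\nu)$ would vanish for all large $N$, so the paper's proof implicitly excludes this too); since $|\nu|\le\mu\le1$ and $\Re\nu<\mu$, the only conceivable offender is $\nu=-1$ with $\mu=1$, and neither proof addresses it.
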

\begin{proof}
Let $\mathcal F_N$ denote the $\sigma$-field generated by the collection $\{\bchi_1,\ldots,\bchi_N\}$ as before. The incidence vector and its subvector are defined as before. Let $\bxi_1, \ldots, \bxi_t$ be the columns of $\bXi_\nu$ with $t\ge 1$. Define $\bxi_0 = \bzero$. By the definition of $\bT_{\lambda-\nu}$, it is equivalent to prove that, for $i = 1, \ldots, t$,
\begin{equation} \label{eq: ind hyp lem}
\frac{\bC_N^{(2)} \bxi_i}{N^\alpha \log^\beta N} \to \sum_{\substack{1\le l\le d_1\\l: \bq_l\neq\bzero}} u_l \bq_l \left( \frac1{(\alpha-\nu)^{i}} \bxi_1 + \frac1{(\alpha-\nu)^{i-1}} \bxi_2 + \cdots + \frac1{(\alpha-\nu)} \bxi_i \right) \quad \text{almost surely and in $L^2$},
\end{equation}
which we shall do by induction.

Now, from the Jordan decomposition, for $i=1,\ldots,t$, we have $\bQ_2 \bxi_i = \bxi_{i-1} + \nu \bxi_i = \widetilde\bxi$. Further, using the evolution equation~\eqref{eq: evolution}, we have
$$\bC_{N}^{(2)} \bxi_i = \bC_{N-1}^{(2)} \bxi_i + \sum_{\substack{1\le l\le d_1\\l: \bq_l\neq\bzero}} \chi_{N,l} \bq_l \bxi_i + \bchi_{N}^{(2)} \widetilde\bxi.$$ From this, we obtain the martingale
\begin{equation} \label{eq: mg three blk nonPF}
M_N = \frac{\bC_N^{(2)} \bxi_i}{\Pi_N (\nu)} - \sum_{n=0}^{N-1} \frac1{(n+1) \Pi_{n+1} (\nu)} \left( \sum_{\substack{1\le l\le d_1\\l: \bq_l \neq \bzero}} C_{n,l} \bq_l \bxi_i + \bC_n^{(2)} \bxi_{i-1} \right)
\end{equation}
having martingale difference
$$\Delta M_N = \frac1{\Pi_{N+1} (\nu)} \left[ \sum_{\substack{1\le l\le d_1\\l: \bq_l\neq\bzero}} \left( \chi_{N+1,l} - \frac1{N+1} C_{N,l} \right) \bq_l \bxi_i + \left( \bchi_{N+1}^{(2)} - \frac1{N+1} \bC_N^{(2)} \right) \widetilde\bxi \right].$$ Since,$\bzeta$ is a right eigenvector corresponding to the \PF eigenvalue of $\bQ_2$, it has all coordinates positive and hence, for some $c>0$, we have $|\widetilde \bxi|^2 \le c \bzeta$. Hence, using Euler's formula~\eqref{eq: Euler} and the fact that at most one of $\chi_{N+1,l}$ for $l=1,\ldots,d_1$ and $\bchi_{N+1}^{(2)}$ can be nonzero simultaneously, we have,
$$\E\left[ |\Delta M_N|^2\right] = \Oh \Bigg( N^{-(1+2\Re\nu)} \left( \E \left[ \textstyle{\sum_{\substack{1\le l\le d_1\\l: \bq_l\neq\bzero}}} C_{N,l} |\bq_l \bxi_i|^2 \right] + \E \left[ \bC_N^{(2)} \bzeta \right]\right) \Bigg).$$ Hence, by the assumptions made on the rates of convergence of $C_{N,l}$ for $l=1,\ldots,d_1$ with $\bq_l\neq\bzero$, and $\bC_N^{(2)} \bzeta$, we obtain $\E[ |\Delta M_N|^2 ] = \Oh ( {\log^\beta N}/{N^{1+2 \Re \nu - \alpha}} )$. Next, we apply Lemma~\ref{lem: mart} with $c_N=\log^\beta N / N^{1+2\Re\nu-\alpha}$ and $a_N=N^{\alpha-\Re\nu}\log^\beta N$. Since $\alpha\ge\mu>0$ and $\Re\nu<\mu\le\alpha$, Lemma~\ref{lem: mart} applies and $M_N/(N^{\alpha-\Re\nu}\log^\beta N)$ and hence $M_N/(N^{\alpha-\nu} \log^\beta N)$ converges to zero almost surely and in $L^2$.

Thus, from Euler's formula~\eqref{eq: Euler} and the definition of the martingale~$M_N$ in~\eqref{eq: mg three blk nonPF}, we have,
\begin{align*}
\lim_{N\to\infty} \frac{\bC_N^{(2)} \bxi_i}{N^\alpha \log^\beta N} = &\lim_{N\to\infty} \frac1{N^{\alpha-\nu} \log^\beta N} \sum_{\substack{1\le l\le d_1\\l: \bq_l\neq\bzero}} \sum_{n=0}^{N-1} \frac{(n+1)^\nu}{\Pi_{n+1}(\nu) \Gamma(\nu+1)} \frac{\log^\beta (n+2)}{(n+1)^{1+\nu-\alpha}} \frac{C_{n,l} \bq_l \bxi_{i}}{(n+1)^\alpha \log^\beta (n+2)}\\
& \quad + \lim_{N\to\infty} \frac1{N^{\alpha-\nu} \log^\beta N} \sum_{n=0}^{N-1} \frac{(n+1)^\nu}{\Pi_{n+1}(\nu) \Gamma(\nu+1)} \frac{\log^\beta (n+2)}{(n+1)^{1+\nu-\alpha}} \frac{\bC_n^{(2)} \bxi_{i-1}}{(n+1)^\alpha \log^\beta (n+2)},
\end{align*}
where the limits are both in almost sure and in $L^2$ sense and we use~\eqref{eq: conv assmpn} in the last step.

Since $\alpha\ge\mu>\Re\nu$, the first term above simplifies to $\frac1{\alpha-\nu} {\sum}^\prime u_l \bq_l \bxi_i$, where the sum is over all $l=1,\ldots,d_1$, such that $\bq_l\neq\bzero$. If for some $i\ge1$, $\lim_{N\to\infty} \bC_N^{(2)} \bxi_{i-1} / (N^\alpha \log^\beta N)$ exists almost surely and in $L^2$, then
$$\lim_{N\to\infty} \frac{\bC_N^{(2)} \bxi_i}{N^\alpha \log^\beta N} = \frac1{\alpha-\nu} \sum_{\substack{1\le l\le d_1\\l: \bq_l\neq\bzero}} u_l \bq_l \bxi_i + \frac1{\alpha-\nu} \lim_{N\to\infty} \frac{\bC_N^{(2)} \bxi_{i-1}}{N^\alpha \log^\beta N}$$
almost surely and in $L^2$. For $i=1$, since $\bxi_0=\bzero$, we immediately have~\eqref{eq: ind hyp lem}. Assuming the induction hypothesis for $i-1$,~\eqref{eq: ind hyp lem} can now easily be extended to $i$ as well.
\end{proof}

\begin{rem} \label{rem: lim zero}
If $\bq_l=\bzero$ for all $l=1,\ldots,d_1$, the argument of the above proof still goes through with the obvious modification that any sum over the indices $l=1,\ldots,d_1$ such that $\bq_l\neq\bzero$ will be zero, and the limit in~\eqref{eq: three blk conv} will also be zero.
\end{rem}

Finally, we obtain some moment bounds for the color counts in the block upper triangular model, as reduced by Lemma~\ref{lem: rearrange}. We first obtain the expectation of the linear combination of the count vector of a block.

\begin{lem} \label{lem: first mom}
Consider an urn model with balanced, block upper triangular replacement matrix $\bR$ formed of nonnegative entries, where the $k$-th diagonal block $\bQ_k$ is either irreducible or the scalar zero with the character $\mu_k$. If $\bQ_k$ is irreducible, let $\bzeta$ be a right eigenvector corresponding to the \PF eigenvalue, which is also the character, $\mu_k$. If $\bQ_k$ is the scalar zero, let $\bzeta$ be the scalar one. Then
\begin{equation} \label{eq: first mom}
\E \left[ \bC_{N}^{(k)} \bzeta \right] = \Pi_{N} (\mu_k) \left( \bC_0^{(k)} \bzeta + \sum_{\substack{1\le m\le k-1\\m: \bQ_{mk}\neq\bzero}} \sum_{n=0}^{N-1} \frac1{(n+1) \Pi_{n+1} (\mu_k)} \E\left[ \bC_n^{(m)} \bQ_{mk} \bzeta \right] \right).
\end{equation}
\end{lem}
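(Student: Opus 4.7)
The plan is to derive a one-step recursion for $\E[\bC_N^{(k)}\bzeta]$ from the evolution equation and then solve it by the standard integrating-factor technique based on the product $\Pi_N(\mu_k)$.

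First I would specialise the evolution equation \eqref{eq: evolution} to the $k$-th block. Since $\bR$ is block upper triangular, only $\bQ_{1k},\ldots,\bQ_{k-1,k}$ and $\bQ_k$ contribute to the update of $\bC_N^{(k)}$, giving
\begin{equation*}
\bC_{N+1}^{(k)} = \bC_N^{(k)} + \bchi_{N+1}^{(k)}\bQ_k + \sum_{\substack{1\le m\le k-1\\ m:\bQ_{mk}\neq\bzero}} \bchi_{N+1}^{(m)}\bQ_{mk}.
\end{equation*}
Next I would take conditional expectation given $\mathcal F_N$, using that the $i$-th entry of $\bchi_{N+1}$ has conditional mean $C_{N,i}/(N+1)$, so that $\E[\bchi_{N+1}^{(m)}\mid\mathcal F_N]=\bC_N^{(m)}/(N+1)$ for every $m$. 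Then I would multiply on the right by $\bzeta$ and use the key algebraic identity $\bQ_k\bzeta=\mu_k\bzeta$, valid both when $\bQ_k$ is irreducible with \PF eigenvalue $\mu_k$ and right eigenvector $\bzeta$, and (trivially) when $\bQ_k=0$, $\bzeta=1$, $\mu_k=0$. Taking unconditional expectation gives the scalar recursion
\begin{equation*}
\E\!\left[\bC_{N+1}^{(k)}\bzeta\right] = \left(1+\frac{\mu_k}{N+1}\right)\E\!\left[\bC_N^{(k)}\bzeta\right] + \frac{1}{N+1}\sum_{\substack{1\le m\le k-1\\ m:\bQ_{mk}\neq\bzero}} \E\!\left[\bC_N^{(m)}\bQ_{mk}\bzeta\right].
\end{equation*}

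Then I would divide both sides by $\Pi_{N+1}(\mu_k)$ and use the definition $\Pi_{N+1}(\mu_k)=\Pi_N(\mu_k)(1+\mu_k/(N+1))$ to rewrite the recursion as
\begin{equation*}
\frac{\E[\bC_{N+1}^{(k)}\bzeta]}{\Pi_{N+1}(\mu_k)} - \frac{\E[\bC_N^{(k)}\bzeta]}{\Pi_N(\mu_k)} = \frac{1}{(N+1)\Pi_{N+1}(\mu_k)} \sum_{\substack{1\le m\le k-1\\ m:\bQ_{mk}\neq\bzero}} \E\!\left[\bC_N^{(m)}\bQ_{mk}\bzeta\right].
\end{equation*}
A telescoping sum from $0$ to $N-1$, together with $\Pi_0(\mu_k)=1$ and the initial value $\E[\bC_0^{(k)}\bzeta]=\bC_0^{(k)}\bzeta$, yields exactly \eqref{eq: first mom} after multiplying back by $\Pi_N(\mu_k)$.

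No step is genuinely hard, but the one that requires a bit of care is ensuring that $\bQ_k\bzeta=\mu_k\bzeta$ holds in both cases of the dichotomy (irreducible block versus scalar zero) so that the recursion and therefore the formula hold uniformly. The other place to be careful is the bookkeeping around $\Pi$: verifying $\Pi_{N+1}(\mu_k)=\Pi_N(\mu_k)(1+\mu_k/(N+1))$ is what makes the recursion collapse into a clean telescoping identity.
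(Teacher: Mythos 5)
Your proof is correct and follows essentially the same route as the paper: you derive the one-step conditional-expectation recursion (which is the paper's display~\eqref{eq: cond exp}, shifted by one index), then unwind it by dividing through by $\Pi_{N+1}(\mu_k)$ and telescoping, which is exactly what the paper means by ``taking further expectation and iterating.'' Your write-up simply makes the iteration step explicit via the integrating-factor identity $\Pi_{N+1}(\mu_k)=\Pi_N(\mu_k)\bigl(1+\mu_k/(N+1)\bigr)$, and correctly notes that $\bQ_k\bzeta=\mu_k\bzeta$ holds in both the irreducible and the scalar-zero case.
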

\begin{proof}
From the evolution equation~\eqref{eq: evolution}, we get $\bC_{N}^{(k)} \bzeta = \bC_{N-1}^{(k)} \bzeta + \sum_{m=1}^{k} \bchi_{N}^{(m)} \bQ_{mk} \bzeta$. Taking conditional expectation, we have
\begin{equation} \label{eq: cond exp}
\E \left[ \bC_{N}^{(k)} \bzeta | \mathcal{F}_{N-1} \right] = \left( 1 + \frac{\mu_k}{N} \right) \bC_{N-1}^{(k)} \bzeta + \sum_{\substack{1\le m\le k-1\\m: \bQ_{mk}\neq\bzero}} \frac1{N} \bC_{N-1}^{(m)} \bQ_{mk} \bzeta.
\end{equation}
Taking further expectation and iterating, the result follows.
\end{proof}

Next, we define a martingale and obtain a bound on the square moments of the martingale difference.
\begin{lem} \label{lem: mg diff}
Consider an urn model with balanced, block upper triangular replacement matrix $\bR$ formed of nonnegative entries, where the $k$-th diagonal block $\bQ_k$ is either irreducible or the scalar zero with the character $\mu_k$. If $\bQ_k$ is irreducible, let $\bzeta$ be a right eigenvector corresponding to the \PF eigenvalue, which is also the character, $\mu_k$. If $\bQ_k$ is the scalar zero, let $\bzeta$ be the scalar one. Then
\begin{equation} \label{def: mg}
M_N = \frac{\bC_N^{(k)} \bzeta}{\Pi_N(\mu_k)} - \sum_{\substack{1\le m\le k-1\\m: \bQ_{mk}\neq\bzero}} \sum_{n=0}^{N-1} \frac{\bC_n^{(m)} \bQ_{mk} \bzeta}{(n+1) \Pi_{n+1} (\mu_k)}
\end{equation}
is a martingale and, for the martingale difference $\Delta M_N = M_{N+1} - M_N$, we have, for some constant $c>0$,
\begin{equation} \label{eq: mg diff}
\E \left[ (\Delta M_N)^2 \right] \le \frac{c}{(N+1) (\Pi_{N+1} (\mu_k))^2} \sum_{\substack{1\le m\le k\\m: \bQ_{mk}\neq\bzero}} \E \left[ \bC_N^{(m)} \bone \right],
\end{equation}
where $\bQ_{kk}=\bQ_k$. When $\bQ_k$ is the scalar zero or equivalently $\mu_k=0$, the above bound~\eqref{eq: mg diff} simplifies to
\begin{equation} \label{eq: mg diff zero}
\E \left[ (\Delta M_N)^2 \right] \le \frac{c}{(N+1)} \sum_{\substack{1\le m\le k-1\\m: \bQ_{mk}\neq\bzero}} \E \left[ \bC_N^{(m)} \bone \right].
\end{equation}
\end{lem}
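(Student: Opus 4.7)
The plan is to verify the martingale property first from the conditional expectation formula~\eqref{eq: cond exp} derived inside the proof of Lemma~\ref{lem: first mom}, then compute the martingale difference explicitly, and finally bound its second moment by exploiting the fact that $\bchi_{N+1}$ is a canonical basis vector.

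For the martingale property, I would divide~\eqref{eq: cond exp} by $\Pi_N(\mu_k)$ and use the recursion $\Pi_N(\mu_k) = (1+\mu_k/N)\Pi_{N-1}(\mu_k)$ to write
\[
\E\!\left[\left.\frac{\bC_N^{(k)}\bzeta}{\Pi_N(\mu_k)}\right| \mathcal{F}_{N-1}\right] = \frac{\bC_{N-1}^{(k)}\bzeta}{\Pi_{N-1}(\mu_k)} + \sum_{\substack{1\le m\le k-1\\m:\bQ_{mk}\ne\bzero}}\frac{\bC_{N-1}^{(m)}\bQ_{mk}\bzeta}{N\,\Pi_{N}(\mu_k)}.
\]
The second term on the right is exactly the $n=N-1$ summand of the running sum appearing in~\eqref{def: mg}, so $\E[M_N\mid\mathcal{F}_{N-1}]=M_{N-1}$.

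For $\Delta M_N$, the evolution equation~\eqref{eq: evolution} gives $\bC_{N+1}^{(k)}\bzeta = \bC_N^{(k)}\bzeta + \sum_{m=1}^{k}\bchi_{N+1}^{(m)}\bQ_{mk}\bzeta$, and since $\bQ_{kk}\bzeta=\mu_k\bzeta$ holds both when $\bQ_k$ is irreducible with \PF right eigenvector $\bzeta$ and when $\bQ_k=\bzero$ with the scalar $\bzeta=1$, the same recursion for $\Pi_N(\mu_k)$ together with the definition~\eqref{def: mg} yields
\[
\Delta M_N = \frac{1}{\Pi_{N+1}(\mu_k)}\sum_{\substack{1\le m\le k\\m:\bQ_{mk}\ne\bzero}}\left(\bchi_{N+1}^{(m)}-\frac{\bC_N^{(m)}}{N+1}\right)\bQ_{mk}\bzeta.
\]
Assembling the $\bQ_{mk}\bzeta$ for $m\le k$ into a single $D$-dimensional column $\bw$ (padded by zeros for $m>k$), this is $(1/\Pi_{N+1}(\mu_k))(\bchi_{N+1}\bw-\E[\bchi_{N+1}\bw\mid\mathcal{F}_N])$. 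Because $\bchi_{N+1}$ picks exactly one canonical basis row with probability $C_{N,i}/(N+1)$, one obtains the conditional variance bound
\[
\Var(\bchi_{N+1}\bw\mid\mathcal{F}_N)\le \E[(\bchi_{N+1}\bw)^2\mid\mathcal{F}_N] = \frac{\bC_N\bw^2}{N+1},
\]
where $\bw^2$ is the entrywise square. All entries of $\bR$ are at most $1$ and $\bzeta$ has a finite largest entry, so $(\bQ_{mk}\bzeta)^2\le c\,\bone$ for some constant $c>0$ and every $m$. Taking expectation and splitting $\bC_N\bw^2$ according to blocks produces~\eqref{eq: mg diff}.

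When $\mu_k=0$, the Perron-Frobenius eigenvalue of an irreducible nonnegative matrix is strictly positive, so $\bQ_k$ must be the scalar zero and the $m=k$ term drops from the sum in~\eqref{eq: mg diff}; moreover $\Pi_{N+1}(0)\equiv 1$, and~\eqref{eq: mg diff zero} follows immediately. I do not anticipate a real obstacle here: the only bookkeeping point worth care is checking that the $n=N-1$ summand of the compensator exactly cancels the cross term arising from the $(1+\mu_k/N)$-factor in the conditional expectation, which is where the particular choice of $1/((n+1)\Pi_{n+1}(\mu_k))$ in~\eqref{def: mg} is used.
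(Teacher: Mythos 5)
Your proposal is correct and matches the paper's argument essentially step by step: verify the martingale property from~\eqref{eq: cond exp}, compute $\Delta M_N$, drop the negative cross term to reduce the conditional variance to a conditional second moment, exploit the fact that $\bchi_{N+1}$ is a canonical basis vector to reduce this to $\bC_N\bw^2/(N+1)$, and then bound $\bw^2$ (i.e., $\mu_k^2\bzeta^2$ and $(\bQ_{mk}\bzeta)^2$) by a constant times $\bone$. The only difference is cosmetic: you package the blockwise sum into a single $D$-dimensional vector $\bw$ instead of writing out the $m=k$ term separately, which is an equivalent formulation since $\bQ_{kk}\bzeta=\mu_k\bzeta$.
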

\begin{proof}
The fact that $M_N$ is a martingale follows from the expression for the conditional expectation in~\eqref{eq: cond exp}. We also have
$$\Delta M_N = \frac1{\Pi_{N+1} (\mu_k)} \left[ \mu_k \left( \bchi_{N+1}^{(k)} - \frac1{N+1} \bC_N^{(k)} \right) + \sum_{\substack{1\le m\le k-1\\m: \bQ_{mk}\neq\bzero}} \left( \bchi_{N+1}^{(m)} - \frac1{N+1} \bC_N^{(m)} \right) \bQ_{mk} \right] \bzeta.$$ Since $\bchi_{N+1}^{(m)}$ cannot be nonzero simultaneously for two distinct values of $m$, taking conditional expectation and ignoring the negative terms, we have,
\begin{equation} \label{eq: cond exp sq mg diff}
\E \left[ (\Delta M_N)^2 | \mathcal{F}_N \right] \le \frac1{(N+1) (\Pi_{N+1} (\mu_k))^2} \left[ \mu_k^2 \bC_N^{(k)} \bzeta^2  + \sum_{\substack{1\le m\le k-1\\m: \bQ_{mk}\neq\bzero}} \bC_N^{(m)} \left( \bQ_{mk} \bzeta \right)^2 \right].
\end{equation}
Since $\bone$ has all coordinates equal to one, and hence, positive, we have, for some constant $c>0$, $\bzeta^2 \le c \bone$ and for $m=1, \ldots, k-1$ satisfying $\bQ_{mk}\neq\bzero$, $\left( \bQ_{mk} \bzeta \right)^2 < c \bone$. Putting these bounds and the fact that $\mu_k^2 \le 1$ in~\eqref{eq: cond exp sq mg diff} and taking expectation,~\eqref{eq: mg diff} follows.

When $\bQ_k$ is the scalar zero or equivalently $\mu_k=0$, then~\eqref{eq: mg diff zero} follows from the simple observations that $\Pi_{N+1}(0)=1$ and the first term within the bracket in~\eqref{eq: cond exp sq mg diff} is absent.
\end{proof}

\begin{rem} \label{rem: miss terms}
If $\bQ_{mk}=\bzero$ for all $m=1,\ldots,k-1$, then the results and the arguments of Lemmas~\ref{lem: first mom} and~\ref{lem: mg diff} will still go through with obvious modifications. The last sum within the bracket on the right side of~\eqref{eq: first mom} and the last term in the definition of the martingale in~\eqref{def: mg} will be absent. The sum on the right side of~\eqref{eq: mg diff} will reduce to $\E[\bC_N^{(k)}\bone]$. If further $\bQ_k=\bzero$, then $\bC_N^{(k)}=\bC_0^{(k)}$ for all $N$ and the martingale defined in~\eqref{def: mg} will be a constant, as $\mu_k=0$ as well. This will give $\E[(\Delta M_N)^2]=0$ in~\eqref{eq: mg diff zero}.
\end{rem}
\end{section}

\begin{section}{Rates of color counts} \label{sec: algo}
We are now ready to give an inductive method to obtain the rates of the color count subvectors corresponding to each block.
\begin{thm}\label{thm: algo}
Consider an urn model with a balanced, block upper triangular replacement matrix $\bR$ formed by nonnegative entries and with blocks $\{\bQ_{ml}\}_{1\le m,l\le K+1}$, where the diagonal blocks $\bQ_{kk}=\bQ_k$ are either the scalar zero or an irreducible matrix, for $k=1,\ldots, K+1$. Let the characters of the diagonal blocks be $\{\mu_k\}_{1\le k\le K+1}$. The color count vector and its subvectors are defined as before. Then, for $k=1,\ldots,K+1$, there exists nonnegative real numbers $\alpha_k$ and nonnegative integers $\beta_k$, such that $(\alpha_k,\beta_k)$ are the rate pairs for $\bC_N^{(k)}$, that is, $\bC_N^{(k)}/(N^{\alpha_k} \log^{\beta_k} N)$ converges almost surely, as well as, in $L^2$. The pairs $\{(\alpha_k,\beta_k)\}_{1\le k\le K+1}$ are defined inductively as follows: For $k=1$, $\alpha_1=\mu_1$ and $\beta_1=0$. Having defined $(\alpha_1,\beta_1),\ldots,(\alpha_{k-1},\beta_{k-1})$, let $(\alpha,\beta)$ be the (lexicographically) largest rate pair in the set $\{(\alpha_m,\beta_m): 1\le m\le k-1, \bQ_{m,k}\neq\bzero\}$. If the set is empty, declare $(\alpha,\beta)=(-\infty,0)$. Then, we define
\begin{align*}
\alpha_{k} &= \max\{\alpha,\mu_{k}\} \intertext{and}
\beta_{k} &=
\begin{cases}
0, &\text{if $\mu_{k}>\alpha$,}\\
\beta+1, &\text{if $\mu_{k}=\alpha$,}\\
\beta, &\text{if $\mu_{k}<\alpha$.}
\end{cases}
\end{align*}
\end{thm}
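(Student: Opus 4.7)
The plan is to proceed by strong induction on the block index $k$. For the base case $k=1$, there are no predecessors, so $(\alpha_1,\beta_1)=(\mu_1,0)$. If $\bQ_1=\bzero$, then $d_1=1$ and $\bC_N^{(1)}\equiv\bC_0^{(1)}$ trivially satisfies the claim. Otherwise $\bQ_1$ is irreducible with \PF eigenvalue $\mu_1>0$ and right eigenvector $\bzeta$; Lemma~\ref{lem: first mom} gives $\E[\bC_N^{(1)}\bzeta]=\Pi_N(\mu_1)\bC_0^{(1)}\bzeta$, and the martingale from Lemma~\ref{lem: mg diff} satisfies $\E[(\Delta M_N)^2]=\Oh(N^{-(1+\mu_1)})$, which is summable, so Lemma~\ref{lem: mart} applied with $a_N\equiv 1$ yields convergence of $M_N$ and hence of $\bC_N^{(1)}\bzeta/N^{\mu_1}$, almost surely and in $L^2$. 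Convergence of $\bC_N^{(1)}\bXi_\nu/N^{\mu_1}$ for the remaining eigenvalues $\nu$ of $\bQ_1$ is then supplied by Lemma~\ref{lem: three blk} (see below for its use), and the Jordan vectors together span the full coordinate space.

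For the inductive step at $k$, assume the claim for all blocks $m<k$, so that $\bC_N^{(m)}/(N^{\alpha_m}\log^{\beta_m}N)\to\bu^{(m)}$ almost surely and in $L^2$. The central calculation is the partial-sum asymptotic
\begin{equation*}
\sum_{n=1}^{N}\frac{\log^{\beta_m}n}{n^{1+\mu_k-\alpha_m}}\;\sim\;\begin{cases}\text{a finite constant},&\alpha_m<\mu_k,\\ \log^{\beta_m+1}N/(\beta_m+1),&\alpha_m=\mu_k,\\ N^{\alpha_m-\mu_k}\log^{\beta_m}N/(\alpha_m-\mu_k),&\alpha_m>\mu_k,\end{cases}
\end{equation*}
whose three regimes exactly mirror the three cases in the definition of $(\alpha_k,\beta_k)$. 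Combining this with Lemma~\ref{lem: first mom} and the inductive moment bounds $\E[\bC_n^{(m)}\bone]=\Oh(n^{\alpha_m}\log^{\beta_m}n)$ yields $\E[\bC_N^{(k)}\bone]=\Oh(N^{\alpha_k}\log^{\beta_k}N)$. Feeding these into Lemma~\ref{lem: mg diff} produces $\E[(\Delta M_N)^2]=\Oh(\log^{\beta_k}N/N^{1+2\mu_k-\alpha_k})$ for the associated martingale $M_N$; choosing $a_N=N^{\alpha_k-\mu_k}\log^{\beta_k}N$, the summability condition $\sum c_{n-1}/a_n^2<\infty$ demanded by Lemma~\ref{lem: mart} is verified in every regime, the borderline case $\alpha_k=\mu_k=0$ being handled by~\eqref{eq: mg diff zero}, in which the $m=k$ term drops out and the extra logarithmic factor in $a_N$ provides the needed margin. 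Thus $M_N/a_N\to 0$, and combining with Euler's formula~\eqref{eq: Euler} and substituting the inductive limits into the drift sum, the same partial-sum asymptotic identifies the almost-sure and $L^2$ limit of $\bC_N^{(k)}\bzeta/(N^{\alpha_k}\log^{\beta_k}N)$, which is supported on those predecessors that actually attain the dominant rate pair.

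To propagate the convergence from the \PF direction $\bzeta$ to the full vector $\bC_N^{(k)}$, apply Lemma~\ref{lem: three blk} for each eigenvalue $\nu\neq\mu_k$ of $\bQ_k$: view the preceding blocks as the first block in the three-block model and collapse the trailing blocks into its final block. The hypothesis $\alpha_k\ge\mu_k$ holds by construction, convergence of $\bC_N^{(k)}\bzeta$ was established in the previous paragraph, and coordinatewise convergence of $C_{N,l}$ for $l$ in earlier blocks (with limit zero whenever the containing block has a lexicographically smaller rate pair) follows from the inductive hypothesis. This gives convergence of $\bC_N^{(k)}\bXi_\nu/(N^{\alpha_k}\log^{\beta_k}N)$; since the columns of $\bXi_{\mu_k}=\bzeta$ together with those of $\bXi_\nu$ over all other eigenvalues span the full coordinate space, convergence of every component of $\bC_N^{(k)}/(N^{\alpha_k}\log^{\beta_k}N)$ follows, closing the induction. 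The principal obstacle is the tight case analysis of the three regimes of $\alpha_m$ versus $\mu_k$ in the drift and martingale-difference estimates, particularly the boundary case $\alpha_m=\mu_k$ which generates the logarithmic enhancement that determines $\beta_k$; the degenerate subcases (scalar-zero diagonal block, absent contributing predecessors, borderline summability) are handled separately but routinely.
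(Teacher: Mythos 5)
Your proof follows the same broad architecture as the paper's: induct on the block index, bound first moments via Lemma~\ref{lem: first mom}, form the martingale of Lemma~\ref{lem: mg diff}, estimate the difference, invoke a Kronecker-type argument and Euler's formula~\eqref{eq: Euler} for the \PF direction, and then propagate to the remaining Jordan directions through Lemma~\ref{lem: three blk} (collapse earlier blocks and later blocks into the first and third blocks of the three-block model). The partial-sum asymptotic with its three regimes is exactly the computation that drives both the paper and your argument. So the decomposition and the key lemmas are the same.

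There is, however, a genuine gap in how you invoke Lemma~\ref{lem: mart}. You apply it uniformly with $a_N = N^{\alpha_k - \mu_k}\log^{\beta_k}N$, including the base case where you write ``applied with $a_N \equiv 1$.'' But Lemma~\ref{lem: mart} explicitly requires the normalizing sequence $\{a_N\}$ to diverge to infinity, and its conclusion is $M_N/a_N \to 0$. The sequence $N^{\alpha_k-\mu_k}\log^{\beta_k}N$ is \emph{identically $1$} whenever $\alpha_k = \mu_k$ and $\beta_k = 0$, which is precisely the case of a leading block whose character strictly exceeds all earlier contributing rates (in particular, $k=1$ with $\mu_1 > 0$, and any $k$ with $\alpha = -\infty$ and $\mu_k > 0$). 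In those cases Lemma~\ref{lem: mart} does not apply, and if one forces the conclusion $M_N/a_N \to 0$ one obtains $M_N \to 0$, which is false: for $k=1$ the martingale is $M_N' = \bC_N^{(1)}\bzeta/\Pi_N(\mu_1)$ with no drift correction, so $M_N' \to 0$ would give $\bC_N^{(1)}\bzeta/N^{\mu_1} \to 0$, contradicting the nondegenerate limit $Y_1$. The correct tool here — used explicitly in the paper for $k=1$ and for the $\alpha=-\infty,\ \mu_k>0$ case — is that $\sum_N \E[(\Delta M_N)^2] < \infty$ makes $M_N$ an $L^2$-bounded martingale which converges almost surely and in $L^2$ to a \emph{nondegenerate} limit. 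This distinction is not cosmetic: the nondegenerate martingale limit is exactly the mechanism through which fresh randomness enters at a new leading block, so treating it as zero destroys the structure of the theorem's conclusion and would wrongly yield deterministic limits. You need to separate the regime $a_N \to \infty$ (where Kronecker via Lemma~\ref{lem: mart} applies and the martingale contribution vanishes in the limit) from the regime $a_N$ bounded (where $L^2$-bounded martingale convergence applies and the martingale limit survives as the random factor).

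A smaller imprecision: your blanket bound $\E[(\Delta M_N)^2] = \Oh(\log^{\beta_k}N/N^{1+2\mu_k-\alpha_k})$ is one logarithmic factor too loose when $\alpha = \mu_k = 0$, where~\eqref{eq: mg diff zero} gives $\Oh(\log^\beta N / N)$ with $\beta = \beta_k - 1$; with the looser bound and $a_N = \log^{\beta_k}N$ the ratio $c_{N-1}/a_N^2$ is only $\sim 1/(N\log^{\beta+1}N)$, which is not summable when $\beta = 0$. You do flag that the borderline case needs the sharper~\eqref{eq: mg diff zero} bound, so the fix is visible from what you wrote, but the exponent in the displayed formula should be $\beta$, not $\beta_k$, in that case to close the argument cleanly.
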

\begin{proof}
We use induction on the number of blocks $k$. For the case $k=1$, if $\mu_1=0$, then the first color count remains constant and hence converges without scaling. If $\mu_1>0$, then $\bQ_1$ is irreducible with \PF eigenvalue $\mu_1$ and a corresponding right eigenvector $\bzeta$. Since $\bzeta$ has all coordinates positive, choose $c>0$ such that $\bzeta^2\le c\,\bzeta$. It is then easy to see that $M_N^{\prime}=\bC_N^{(1)} \bzeta/\Pi_N(\mu_1)$ is a martingale with the martingale difference
$$\Delta M_N^{\prime}=\frac{\mu_1}{\Pi_{N+1}(\mu_1)}\left(\bchi_{N+1}^{(1)}-\frac1{N+1}\bC_N^{(1)}\right)\bzeta.$$ Since $0<\mu_1\le1$, we get, using Euler's formula~\eqref{eq: Euler}, $\E[\left(\Delta M_N^{\prime}\right)^2]\le {c}\E\left[M_N^{\prime}\right]/({(N+1) \Pi_{N+1}(\mu_1)}) = \Oh\left(N^{-(1+\mu_1)}\right)$, which is summable. Hence $M_N^{\prime}$ is an $L^2$-bounded martingale, which converges to a nondegenerate random variable almost surely and in $L^2$, and thus, by Euler's formula~\eqref{eq: Euler}, $\bC_N^{(1)}\bzeta/N^{\mu_1}$ also converges to a nondegenerate random variable $Y_1$ almost surely and in $L^2$.

Next consider any eigenvalue $\nu$ of $\bQ_1$ other than the \PF one, $\mu_1$. Let the corresponding Jordan decomposition be $\bQ_1 (\bxi_1:\cdots:\bxi_t) = (\bxi_1:\cdots:\bxi_t) \bD_\nu$, for some $t\ge1$. Note that $\bQ_1\bxi_i = \bxi_{i-1} + \nu\bxi_i$, for $i=1,\ldots,t$, where $\bxi_0=\bzero$. We define the martingale
$$M_N^{\prime\prime}= \frac{\bC_N^{(1)}\bxi_i}{\Pi_N(\nu)} - \sum_{n=0}^{N-1} \frac{\bC_n^{(1)}\bxi_{i-1}}{(n+1)\Pi_{n+1}(\nu)}$$ as in the proof of Lemma~\ref{lem: three blk} and arguing similarly, we get $\E[\left(\Delta M_N^{\prime\prime}\right)^{2}]=\Oh(N^{-(1+2\Re\nu-\mu_1)})$. Then, again applying Lemma~\ref{lem: mart} with $c_N=N^{-(1+2\Re\nu-\mu_1)}$ and $a_N=N^{\mu_1-\Re\nu}$ and arguing as in the proof of Lemma~\ref{lem: three blk}, we have $M_N^{\prime\prime}/N^{\mu_1-\nu}\to0$ almost surely and in $L^2$. Again, simplifying using Euler's formula~\eqref{eq: Euler}, we have,
$$\lim_{N\to\infty} \frac1{N^{\mu_1}} \bC_N^{(1)}\bxi_i =\lim_{N\to\infty} \frac1{N^{\mu_1-\nu}} \sum_{n=0}^{N-1} \frac{(n+1)^\nu}{\Pi_{n+1}(\nu) \Gamma(\nu+1)} \frac1{(n+1)^{1-\mu_1+\nu}}\frac{\bC_n^{(1)}\bxi_{i-1}}{(n+1)^{\mu_1}},$$
where the limits are both in almost sure and in $L^2$ sense. As $\bxi_0=\bzero$, the limit is zero for $i=1$ and then inductively, it can be shown that the limits are zero for all $i=1,\ldots,t$. This gives $\bC_N^{(1)} \bXi_\nu / N^{\mu_1} \to \bzero$ almost surely and in $L^2$. Finally, consider $\bR \bXi = \bXi \bD$, the Jordan decomposition of $\bR$, where $\bzeta$ is the first column of $\bXi$. Then
$\bC_N^{(1)} \bXi / N^{\mu_1} \to Y_1 (1,0,\ldots,0)$ and hence
\begin{equation} \label{eq: lim one}
\bC_N^{(1)} / N^{\mu_1} \to Y_1 \bpi \quad \text{almost surely and in $L^2$,}
\end{equation}
where $\bpi$ is the first row of $\bXi^{-1}$ and is a left eigenvector (normalized so that $\bpi\bzeta=1$) of $\bQ_1$ corresponding to the \PF eigenvalue of $\bQ_1$. Hence $\bpi$ has all coordinates positive. This shows the rate pair $(\alpha_1,\beta_1)=(\mu_1,0)=(\mu_1,\kappa_1)$. This technique of handling nonzero characters will be repeated for the later blocks as well. For a block with a nonzero character, which is then the \PF eigenvalue of the corresponding irreducible diagonal block, we first find out the rate of the linear combination of the corresponding count subvector with respect to a right eigenvector corresponding to the \PF eigenvalue. The limit will be a nondegenerate random variable, which will be a function of previous such random variables, unless the block is the leading block of its cluster with the order of the corresponding leading character zero. We then obtain the limits of the linear combinations corresponding to the Jordan vectors as well with the same rate and combine them to get the final result for the count subvector.

Assume that the rate pairs have been obtained for the first $k-1$ blocks. We define $(\alpha,\beta)$ and $(\alpha_k,\beta_k)$ as in the statement of the theorem and show that $(\alpha_k,\beta_k)$ is the required rate pair for the $k$-th block. If $\alpha=-\infty$, then $\bQ_{mk}=\bzero$ for all $m<k$. If we further have $\mu_k=0$, that is, $\bQ_k=\bzero$, then $\bC_{N}^{(k)}=\bC_0^{(k)}$ for all $N$ and the rate pair will be $(\alpha_k,\beta_k)=(0,0)$ as required. So assume either $\alpha\neq-\infty$ or $\mu_k\neq0$. Equivalently we have
\begin{equation} \label{eq: nonzero}
\bQ_{mk}\neq\bzero \quad \text{for some $m=1,\ldots,k$.}
\end{equation}

First consider the case $\mu_k=0$, that is, $\bQ_k$ is the scalar zero. Hence, from~\eqref{eq: nonzero}, we have $\bQ_{mk}\neq\bzero$ for some $m=1,\ldots,k-1$. Then the set $\{(\alpha_m,\beta_m): 1\le m\le k-1, \bQ_{mk}\neq\bzero\}$ is nonempty and $\alpha_k$ is a nonnegative real and $\beta_k$ is a nonnegative integer. Define the martingale $M_N$ as in Lemma~\ref{lem: mg diff} with $\bzeta$ as the scalar one, since $\mu_k=0$. Then using~\eqref{eq: mg diff zero} and the choice of $(\alpha,\beta)$, we have
\begin{equation} \label{eq: diff bd zero}
\E\left[(\Delta M_N)^2\right] = \Oh\left( N^{-(1-\alpha)} \log^\beta N\right).
\end{equation}
Then we apply Lemma~\ref{lem: mart} with $c_N=N^{-(1-\alpha)} \log^\beta N$ and $a_N=N^{\alpha_k} \log^{\beta_k} N$. Since $\mu_k=0$ and $\alpha$ is nonnegative, we have only two possibilities, $\alpha>\mu_k=0$ and $\alpha=\mu_k=0$. Observe that
$$\frac{c_{N-1}}{a_N^2} \sim
\begin{cases}
{\displaystyle \frac1{N\log^{\beta+2}N}}, &\text{if $\alpha=\mu_k=0$, or equivalently, $\alpha_k=0$ and $\beta_k=\beta+1$,}\\
{\displaystyle \frac1{N^{1+\alpha}\log^{\beta}N}}, &\text{if $\alpha>\mu_k=0$, or equivalently, $\alpha_k=\alpha$ and $\beta_k=\beta$}
\end{cases}$$
and thus $\sum_N (c_{N-1}/a_N^2)<\infty$. So, from Lemma~\ref{lem: mart}, we have $M_N/(N^{\alpha_k} \log^{\beta_k} N)\to0$ almost surely and in $L^2$. Simplifying using Euler's formula~\eqref{eq: Euler}, the definition of the martingale, the choice of $(\alpha, \beta)$ and the facts that $\bzeta$ is the scalar one and $N^{\alpha_k} \log^{\beta_k}N\to\infty$, we have
\begin{equation} \label{eq: lim zero}
\lim_{N\to\infty} \frac{\bC_N^{(k)}}{N^{\alpha_k} \log^{\beta_k} N} =
\begin{cases}
\displaystyle{\sum_{\substack{1\le m\le k-1\\m:\bQ_{mk}\neq\bzero}}} \frac1{\beta+1} \displaystyle{\lim_{N\to\infty}} \frac{\bC_N^{(m)}}{\log^\beta N} \bQ_{mk}, &\text{if $\alpha=\mu_k=0$,}\\
\displaystyle{\sum_{\substack{1\le m\le k-1\\m:\bQ_{mk}\neq\bzero}}} \frac1{\alpha} \displaystyle{\lim_{N\to\infty}} \frac{\bC_N^{(m)}}{N^\alpha \log^\beta N} \bQ_{mk}, &\text{if $\alpha>\mu_k=0$,}
\end{cases}
\end{equation}
where the limits are almost sure, as well as, in $L^2$. Also, all the limits are nonnegative and as $(\alpha, \beta)$ is the largest rate pair, at least one of them is nondegenerate. So the limit above is nondegenerate and we have the rate pair $(\alpha_k,\beta_k)$ as suggested in the statement of the theorem. The limit in~\eqref{eq: lim zero} will be denoted by $Y_k$.

Next consider the case $\mu_k>0$. Then $\bQ_k$ is irreducible with the \PF eigenvalue $\mu_k$. Choose $\bzeta$ as a right eigenvector of $\bQ_k$ corresponding to $\mu_k$. Since $\bzeta$ has all coordinates positive, choose $c>0$, such that $\bone\le c\bzeta$.

If $\alpha=-\infty$, that is, $\bQ_{mk}=\bzero$ for all $m=1,\ldots,k-1$, we shall apply Lemmas~\ref{lem: first mom} and~\ref{lem: mg diff} keeping Remark~\ref{rem: miss terms} in mind. In this case $\alpha_k=\mu_k$ and $\beta_k=0$. From Lemma~\ref{lem: first mom}, we have $\E\left[\bC_N^{(k)}\bzeta\right] = \Oh \left( N^{\mu_k} \right)$. Defining the martingale $M_N$ as in Lemma~\ref{lem: mg diff}, we have $\E[(\Delta M_N)^2]=\Oh(N^{-(1+\mu_k)})$, which is summable. Thus, $M_N$ and hence, by Euler's formula~\eqref{eq: Euler}, $\bC_N^{(k)}\bzeta/N^{\mu_k} = \bC_N^{(k)}\bzeta/(N^{\alpha_k}\log^{\beta_k}N)$ converges almost surely and in $L^2$ to nondegenerate limits and the limit of the second sequence will be denoted as $Y_k$.

Finally consider the possibility that $\alpha\neq-\infty$ and $\mu_k>0$. Then $\alpha$ is a nonnegative real and $\beta$ is a nonnegative integer. From Lemma~\ref{lem: first mom}, we have,
$$\E \left[ \frac1{\Pi_N (\mu_k)} \bC_{N}^{(k)} \bzeta \right] = \bC_0^{(k)} \bzeta + \sum_{\substack{1\le m\le k-1\\m:\bQ_{mk}\neq\bzero}} \sum_{n=0}^{N-1} \frac{\log^{\beta} (n+2)}{(n+1)^{1+\mu_k-\alpha}} \frac{(n+1)^{\mu_k}}{\Pi_{n+1} (\mu_k)} \E\left[ \frac{\bC_n^{(m)} \bQ_{mk} \bzeta}{(n+1)^{\alpha} \log^{\beta} (n+2)} \right].$$
By the choice of $(\alpha,\beta)$, the expectations in the sum of the right side above are bounded. Hence, using Euler's formula~\eqref{eq: Euler}, we have
\begin{align*}
\E\left[\bC_N^{(k)}\bone\right] \le c\E\left[\bC_N^{(k)}\bzeta\right] &=
\begin{cases}
\Oh(N^{\mu_k}), &\text{if $\mu_k>\alpha$,}\\
\Oh(N^{\mu_k} \log^{\beta+1}N), &\text{if $\mu_k=\alpha$,}\\
\Oh(N^{\alpha} \log^\beta N), &\text{if $\mu_k<\alpha$}
\end{cases}\\
&=\Oh(N^{\alpha_k} \log^{\beta_k} N).
\end{align*}
Observe that the rate pair $(\alpha_k, \beta_k)$ is lexicographically larger than or equal to $(\alpha, \beta)$ and hence $(\alpha_k,\beta_k)$ gives the highest rate, giving,
$$\sum_{\substack{1\le m\le k\\m:\bQ_{mk}\neq\bzero}} \E \left[ {\bC_N^{(m)}\bone} \right] = \Oh\left(N^{\alpha_k} \log^{\beta_k} N\right).$$ Then define the martingale $M_N$ as in Lemma~\ref{lem: mg diff} and we have from~\eqref{eq: mg diff},
$$\E\left[(\Delta M_N)^2\right] = \Oh \left( N^{-(1+2\mu_k-\alpha_k)} \log^{\beta_k}N \right).$$ We then apply Lemma~\ref{lem: mart} with $c_N = N^{-(1+2\mu_k-\alpha_k)} \log^{\beta_k}N$ and $a_N = N^{\alpha_k - \mu_k} \log^{\beta_k} N$. Observe that $c_{N-1}/a_N^2 \sim N^{-(1+\alpha_k)} \log^{-\beta_k}N$. Now $\alpha_k=\max\{\alpha,\mu_k\}\ge\mu_k>0$ and hence $\sum_N (c_{N-1}/a_N^2) <\infty$. So, again from Lemma~\ref{lem: mart}, we have $M_N/(N^{\alpha_k-\mu_k}\log^{\beta_k}N)\to0$ almost surely and in $L^2$. Further simplifying using Euler's formula~\eqref{eq: Euler}, the definition of the martingale and the choice of $(\alpha, \beta)$, we have
\begin{multline} \label{eq: lim positive}
\lim_{N\to\infty} \frac{\bC_N^{(k)}\bzeta}{N^{\alpha_k} \log^{\beta_k} N}\\ = \lim_{N\to\infty} \frac1{N^{\alpha_k-\mu_k}\log^{\beta_k}N} \sum_{\substack{1\le m\le k-1\\m:\bQ_{mk}\neq\bzero}} \sum_{n=0}^{N-1} \frac{\log^\beta (n+2)}{(n+1)^{1+\mu_k-\alpha}} \frac{(n+1)^{\mu_k}}{\Pi_{n+1}(\mu_k) \Gamma(\mu_k+1)} \frac{\bC_n^{(m)} \bQ_{mk}\bzeta}{(n+1)^\alpha \log^\beta (n+2)},
\end{multline}
where the limits are almost sure, as well as, in $L^2$. Again, all the limits are nonnegative and as $(\alpha, \beta)$ is the largest rate pair, at least one of them is nondegenerate. So the limit above is nondegenerate and will be denoted by $Y_k$. If $\mu_k<\alpha=\alpha_k$ and $\bu_m$ denotes the almost sure and $L^2$ limit of $\bC_N^{(m)}/(N^\alpha\log^\beta N)$ for $m=1,\ldots,k-1$ with $\bQ_{mk}\neq\bzero$, then the limit in~\eqref{eq: lim positive} can be further simplified to
\begin{equation} \label{eq: lim positive nonlead}
\lim_{N\to\infty} \frac{\bC_N^{(k)}\bzeta}{N^{\alpha_k} \log^{\beta_k} N} = \frac1{\alpha_k-\mu_k}\sum_{\substack{1\le m\le k-1\\m:\bQ_{mk}\neq\bzero}} \bu_m \bQ_{mk} \bzeta = \sum_{\substack{1\le m\le k-1\\m:\bQ_{mk}\neq\bzero}} \bu_m \bQ_{mk} (\alpha_k\bI-\bQ_k)^{-1} \bzeta.
\end{equation}
Thus, ${\bC_N^{(k)}\bzeta}/({N^{\alpha_k} \log^{\beta_k} N})$ converges to a nondegenerate limit $Y_k$ both almost surely and in $L^2$ and for $\mu_k<\alpha$, the limit is given by the right side of~\eqref{eq: lim positive nonlead}.

Next consider an eigenvalue $\nu$ of $\bQ_k$ other than the \PF eigenvalue with the corresponding Jordan decomposition $\bQ_k \bXi_\nu = \bXi_\nu \bD_\nu$. Then club all the colors after the $k$-th block into a single one and make the first $k-1$ blocks into one group and the $k$-th block into another. This gives us the replacement matrix in the form~\eqref{model: three block}. Also, by the choice of $(\alpha,\beta)$,~\eqref{eq: conv assmpn} holds with the rate pair $(\alpha,\beta)$ and hence $(\alpha_k, \beta_k)$. (Note that $0$ is a possible limit in~\eqref{eq: conv assmpn}.) We also have that ${\bC_N^{(k)}\bzeta}/({N^{\alpha_k} \log^{\beta_k} N})$ converges to a nondegenerate random variable $Y_k$ almost everywhere, as well as in $L^2$. Thus, by Lemma~\ref{lem: three blk}, we have
\begin{equation} \label{eq: lim positive nonPF}
\frac{\bC_N^{(k)}\bXi_\nu}{N^{\alpha_k} \log^{\beta_k} N}\to \sum_{\substack{1\le l\le d_1+\cdots+d_{k-1}\\l: \bq_l\neq\bzero}} u_l \bq_l \bXi_\nu \bT_{\alpha_k-\nu} \quad \text{almost surely and in $L^2$,}
\end{equation}
where $u_l$ is the almost sure and $L^2$ limit of ${\bC_{N,l}}/({N^{\alpha_k} \log^{\beta_k} N})$ for any index $l$ allowed in the sum. If $\alpha=-\infty$, that is, $\bq_l=\bzero$ for all $l=1,\ldots,d_1+\cdots+d_{k-1}$, then, by Remark~\ref{rem: lim zero}, the limit in~\eqref{eq: lim positive nonPF} still holds with the interpretation that the limit is zero.

If $\mu_k\ge\alpha$ (this includes the case $\alpha=-\infty$), then observe that the rate pair $(\alpha_k,\beta_k)$ gives a higher rate than $(\alpha,\beta)$ and thus the limits $u_m$ in~\eqref{eq: lim positive nonPF} are all zero, which gives ${\bC_N^{(k)}\bXi_\nu}/({N^{\alpha_k} \log^{\beta_k} N})\to\bzero$ almost surely and in $L^2$. If $\mu_k<\alpha$, then $\mu_k<\alpha=\alpha_k$, then from~\eqref{eq: T}, we have $\bXi_\nu \bT_{\alpha_k-\nu} = (\alpha_k \bI - \bQ_k)^{-1} \bXi_\nu$. Also, observe that, by the induction hypothesis, the rates are same within a block. Thus, if any index $l$ is included in the sum on the right side of~\eqref{eq: lim positive nonPF}, we can include any other index $l^\prime$ in the same block with $\bq_{l^\prime}=\bzero$, as ${\bC_{N,l^\prime}}/({N^{\alpha_k} \log^{\beta_k} N})$ will also converge to $u_{l^\prime}$ almost surely and in $L^2$, but will not contribute anything extra. Further, for the $m$-th block, the limit vector $\bu_m$ in~\eqref{eq: lim positive nonlead} consists of such $u_l$'s only. So we can rewrite~\eqref{eq: lim positive nonPF} as
$$\lim_{N\to\infty} \frac{\bC_N^{(k)}\bXi_\nu}{N^{\alpha_k} \log^{\beta_k} N} = \sum_{\substack{1\le m\le k-1\\m:\bQ_{mk}\neq\bzero}} \bu_m \bQ_{mk} (\alpha_k\bI-\bQ_k)^{-1} \bXi_\nu.$$

Finally, consider $\bQ_k \bXi = \bXi \bD$, the Jordan decomposition of $\bQ_k$, where the first column of $\bXi$ is $\bzeta$. Then, we have
$$\lim_{N\to\infty} \frac{\bC_N^{(k)}\bXi}{N^{\alpha_k} \log^{\beta_k} N} =
\begin{cases}
Y_k (1,0,\ldots,0), &\text{if $\mu_k\ge\alpha$,}\\
\sum_{\substack{1\le m\le k-1\\m:\bQ_{mk}\neq\bzero}} \bu_m \bQ_{mk} (\alpha_k\bI-\bQ_k)^{-1} \bXi, &\text{if $\mu_k<\alpha$.}
\end{cases}$$
Observing, as in the case $k=1$, that $\bXi^{-1}$ has the first row as $\bpi$, a left eigenvector (normalized so that $\bpi\bzeta=1$) of $\bQ_k$ corresponding to the \PF eigenvalue, which has all coordinates positive, we conclude that
\begin{equation} \label{eq: lim blk k}
\lim_{N\to\infty} \frac{\bC_N^{(k)}}{N^{\alpha_k} \log^{\beta_k} N} =
\begin{cases}
Y_k \bpi, &\text{if $\mu_k\ge\alpha$,}\\
{\displaystyle \sum_{\substack{1\le m\le k-1\\m:\bQ_{mk}\neq\bzero}} \bu_m \bQ_{mk} (\alpha_k\bI-\bQ_k)^{-1}}, &\text{if $\mu_k<\alpha$.}
\end{cases}
\end{equation}
This shows that $(\alpha_k,\beta_k)$ is the rate pair for the $k$-th block and completes the induction step.
\end{proof}
\end{section}

\begin{section}{Rearrangement to the increasing order} \label{sec: notations}
To identify the limits when the color counts are scaled as in Theorem~\ref{thm: algo}, we need to first rearrange the colors further and reduce the replacement matrix to the increasing form (see Definition~\ref{def: incr}) and make a technical assumption~\eqref{assmp}. The rearrangement to the increasing order is an extension of Proposition~2.4 of \cite{bose:dasgupta:maulik:2009b}. Before going into the rearrangement, we need to introduce some notions in analogy to Section~2 of \cite{bose:dasgupta:maulik:2009b}.

Note that, in Lemma~\ref{lem: rearrange}, we have required the zero diagonal blocks to be scalar. However, we do not impose any such condition here. We shall require zero diagonal blocks of higher dimensions for the rearrangement to the increasing order in Lemma~\ref{lem: rearrange2}. Also, if $\bQ$ is irreducible, then the character $\mu$ is its \PF eigenvalue, and hence all its eigenvalues are smaller than or equal to $\mu$ in modulus. If $\bQ=\bzero$, then all its eigenvalues are zero. Thus, in either case, all the eigenvalues of $\bQ$ is smaller than or equal to its character in modulus.
\begin{defn}\label{def: max}
For a block upper triangular matrix $\bR$ formed by nonnegative entries with $K+1$ characters $\{\mu_k\}_{1\le k\le K+1}$, let $1\le i_1<i_2<\cdots<i_J<i_{J+1}\le K+1$ be the \textit{indices of the running maxima of the characters}, that is, $\mu_1=\mu_{i_1}\le\mu_{i_2}\le\cdots\le \mu_{i_J}\le\mu_{i_{J+1}}$ and for $i_j<k<i_{j+1}$ with $j=1,\ldots,J$ or $i_j<k\le K+1$ with $j=J+1$, we have $\mu_k<\mu_{i_j}$.
\end{defn}
\noindent Since the replacement matrix $\bR$ is assumed to be balanced, we necessarily have $\bQ_{K+1}$ is balanced and $\mu_{K+1}=1$. Also, the row sums of all other diagonal blocks are less than or equal to one and hence $\mu_k\le 1$ for $k=1,\ldots,K$. So we necessarily have $i_{J+1}=K+1$.
\begin{defn}
In a balanced block upper triangular matrix $\bR$ formed by the nonnegative entries with characters $\{\mu_k\}_{1\le k\le K+1}$ and their indices of running maxima $\{i_j\}_{1\le j\le J+1}$, the blocks indexed by $i_j,i_j+1,\ldots,i_{j+1}-1$ form the $j$-th \textit{cluster} for $j=1,\ldots,J$ and the $i_{J+1}$-th block alone forms the $(J+1)$-th \textit{cluster}. For $j=1,\ldots,J+1$, the $i_j$-th block is called the \textit{leading block} of the $j$-th cluster.

We shall define the \textit{leading character} as $\lambda_j=\mu_{i_j}$ for $j=1,\ldots,J+1$ and the \textit{order} of the leading character as, for $j=1,\ldots,J+1$,
\begin{equation} \label{def: nu}
\kappa_j = \# \{j' < j : \lambda_{j'} = \lambda _j\} = \# \{k < j : \mu_k = \mu_{i_j} \},
\end{equation}
which counts the number of earlier occurrences of the character of a leading block.
\end{defn}

The concepts of clusters and the leading blocks are to be viewed in comparison to the notions of the blocks and the leading colors in \cite{bose:dasgupta:maulik:2009b}. Since the characters are nonnegative, if $\lambda_j=0$ for some $j=1,\ldots,J+1$, then we must have $\lambda_1=\cdots=\lambda_j=0$, $i_1=1,\ldots,i_j=j$ and $\kappa_1=0,\ldots,\kappa_j=j-1$. Thus, if there are zero diagonal blocks at the beginning, all of them will form clusters of size one and these are the only leading zero diagonal blocks.

Any balanced block upper triangular matrix $\bR$, formed by nonnegative entries and reduced to the form where all diagonal blocks are either irreducible matrices or the scalars zero, can be further reduced to a form which we describe next.
\begin{defn}\label{def: incr}
A balanced block upper triangular matrix $\bR$, formed by nonnegative entries and blocks $\{\bQ_{kl}\}_{1\le k,l\le K+1}$, which are either irreducible or zero matrices, with indices of running maxima of characters $\{i_j\}_{j=1}^{J+1}$ and leading characters $\{\lambda_j\}_{j=1}^{J+1}$, is said to be in the \textit{increasing order} if
\begin{enumerate}
\item All non-leading zero diagonal blocks are scalar. \label{def: incr scalar}
\item For index $k$ of any non-leading block, that is, $i_j < k < i_{(j+1)}$ with $j = 1, 2, \ldots, J$, we have
$$\sum_{m=i_j}^{k-1} \bQ_{mk} \neq \bzero.$$ \label{def: incr nonlead}
\item If $\lambda_j=0$ and $\kappa_j>0$ for some $j=2,\ldots,J$, then each column of the submatrix $\bQ_{i_j-1,i_j}$ must be nonzero. \label{def: incr zero}
\end{enumerate}
\end{defn}
\noindent The condition~\eqref{def: incr nonlead} holds for any non-leading block. In fact, the condition~\eqref{def: incr nonlead} in the above definition implies that at least one entry of the submatrices $\bQ_{mk}$ for $m=i_j,\ldots,k-1$ to be nonzero. However, the condition~\eqref{def: incr zero} extends the requirement to the leading blocks as well, when the leading character is zero and has a positive order. Further, in this case, we not only have the submatrices as nonzero, we indeed have each of the columns of the submatrices as nonzero. Note that the condition~\eqref{def: incr zero} in the above definition is vacuous if $\lambda_1>0$.

It was shown in Proposition~2.4 of \cite{bose:dasgupta:maulik:2009b} that any balanced upper triangular matrix can be reduced to another block upper triangular matrix satisfying the condition~\eqref{def: incr nonlead} of Definition~\ref{def: incr} by a similarity transform using permutation matrices. As in Section~\ref{sec: intro}, the similarity transform using a permutation matrix will be viewed as a rearrangement of the states, will not change the eigenvalues with their multiplicities and will rearrange any eigenvector accordingly. We extend that rearrangement in the following result.
\begin{lem}\label{lem: rearrange2}
Any balanced block triangular matrix $\bR$, with all entries nonnegative and diagonal blocks either an irreducible matrix or the scalar zero, is similar to a matrix in increasing order via a permutation matrix.
\end{lem}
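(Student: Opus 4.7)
The plan is to start from the form provided by Lemma~\ref{lem: rearrange} and produce, via a single permutation of the color labels together with a coarsening of the block partition, a matrix satisfying the three conditions of Definition~\ref{def: incr}. From Definition~\ref{def: max} I first read off the characters $\mu_1,\ldots,\mu_{K+1}$, the running-maxima indices $i_1<\cdots<i_{J+1}$, and the clusters. Since characters are nonnegative and $\lambda_j=\mu_{i_j}$ is nondecreasing, any cluster with $\lambda_j=0$ forces $\lambda_1=\cdots=\lambda_j=0$; within such a cluster the strict inequality $\mu_k<\lambda_j$ required of non-leading positions would demand $\mu_k<0$, which is impossible. So every zero-character cluster is a singleton, and in the starting form each such singleton is a scalar zero diagonal block sitting in the initial segment of the matrix.

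To obtain condition~\eqref{def: incr nonlead} I work one cluster at a time on those clusters with $\lambda_j>0$, where the leading block $\bQ_{i_j}$ is irreducible. This is a block-level extension of Proposition~2.4 of \cite{bose:dasgupta:maulik:2009b}: keep the leading block at position $i_j$, and for $k=i_j+1,\ldots,i_{j+1}-1$ place at position $k$ an as-yet-unplaced block $m$ from the cluster for which $\bQ_{m',m}\neq\bzero$ for some already-placed $m'\in\{i_j,\ldots,k-1\}$. If no such $m$ exists at some step, the remaining unplaced blocks receive no contribution from the placed portion of the cluster, so they depend only on blocks of earlier clusters; I extract them and reinsert them in their relative original order immediately before the leading block at $i_j$. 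Because extracted blocks send no arrows into earlier clusters, this move preserves block upper triangularity, nonnegativity and the balanced property. The procedure is restarted on the strictly smaller current cluster and terminates in finitely many steps.

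For condition~\eqref{def: incr zero} (and simultaneous preservation of condition~\eqref{def: incr scalar}) I coalesce the initial segment of zero colors into a sequence of antichains of the \emph{leads to} partial order by the standard longest-predecessor-chain layering: layer~$1$ is the set of sources, and layer~$k$ consists of colors whose longest predecessor chain has length $k-1$. Each layer is an antichain, so the corresponding submatrix of $\bR$ vanishes, and the layer can legitimately be coalesced into a single zero leading block of the kind permitted by Definition~\ref{def: incr}. By construction every color in layer~$k\ge 2$ has a predecessor in layer~$k-1$, which yields that every column of the incoming submatrix $\bQ_{i_j-1,i_j}$ is nonzero, exactly condition~\eqref{def: incr zero}. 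Condition~\eqref{def: incr scalar} is automatic because every remaining zero diagonal block lies inside a positive-character cluster, is a non-leading block, and is scalar by the input assumption on Lemma~\ref{lem: rearrange}.

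The main technical obstacle is keeping the two constructions consistent. Extractions in the positive-character clusters insert blocks with characters strictly below $\lambda_j$ into the region before position $i_j$; since none of them can create a new running maximum at or above $\lambda_j$, they do not disturb any cluster processed further to the right, though they may refine the running-maxima decomposition preceding $i_j$, on which the procedure is then recursively applied. The layering of the initial zero segment is itself a topological sort of the leads-to order on those colors, so it is compatible with block upper triangularity of the global matrix. Once these order-theoretic verifications are in place, the composition of all the permutations used in the positive-character clusters together with the permutation realizing the zero-color layering is the desired permutation matrix, and the resulting matrix (with the coarsened block partition produced by the initial coalescings) satisfies all three conditions of Definition~\ref{def: incr}.
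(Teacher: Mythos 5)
Your proof follows essentially the same two-step route as the paper: a block-level version of Proposition~2.4 of \cite{bose:dasgupta:maulik:2009b} to enforce condition~\eqref{def: incr nonlead}, followed by a layering of the initial segment of zero colors for condition~\eqref{def: incr zero}. The paper defers the first step to the cited reference and, for the second, iteratively peels off the states whose columns vanish beyond the already-formed layers, which is exactly your longest-predecessor-chain layering.
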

\begin{proof}
As in the proof of Lemma~\ref{lem: rearrange}, we shall describe the similarity transform by a permutation matrix through a rearrangement of the states. We do it in two steps. In the first step, we obtain a matrix satisfying condition~\eqref{def: incr nonlead} in Definition~\ref{def: incr} alone. The rearrangement is very similar to Proposition~2.4 of \cite{bose:dasgupta:maulik:2009b}, where we replace the diagonal entries by the characters of the blocks, the leading colors by the leading blocks and the blocks by the clusters. We do not repeat the proof here. By an abuse of notation, we shall use the same set of notations for the rearranged matrix. Note that all the zero diagonal blocks remain scalar after this rearrangement.

If the first diagonal block is nonzero and hence all the leading characters are nonzero, the condition~\eqref{def: incr zero} in Definition~\ref{def: incr} becomes vacuously true and the proof is complete. Thus, without loss of generality, assume that the first $M(>0)$ diagonal states are zero. They are the only leading blocks, which are zero. In the second step, we shall rearrange the colors in these $M$ states to satisfy the condition~\eqref{def: incr zero} in Definition~\ref{def: incr}. Thus the condition~\eqref{def: incr nonlead} in Definition~\ref{def: incr} will remain unaffected. Also none of the non-leading zero diagonal blocks will be affected and they will remain scalar satisfying the condition~\eqref{def: incr scalar} in Definition~\ref{def: incr}. Note that this second step of rearrangement may coalesce some of the initial zero diagonal states and hence the overall number of blocks may reduce. We shall proceed by induction. The $m$-th step will produce a zero diagonal block (not necessarily scalar), which is also a cluster, with the (leading) character zero and order $m-1$. We shall show that after $m$-th induction step with $m\ge 1$, the rearranged matrix is block upper triangular, the first $m$ blocks, and thus clusters, have $\{d_k\}_{1\le k\le m}$ states and satisfy the condition~\eqref{def: incr zero} in Definition~\ref{def: incr} and for the remaining $M-(d_1+\cdots+d_m)$ states, the columns have at least one entry corresponding to the blocks after the first $m-1$ ones, that is, at least one entry with index more than $d_1+\cdots+d_{m-1}$, as nonzero.

Observe that since $\lambda_1=0$, the first column must be the zero vector. We first bring all states (which includes the first state), whose columns are zero vectors, to the front and declare that they constitute the first block, as well as the cluster. The order of the states within the block is not important. Note that this rearrangement maintains the block upper triangular structure. The condition on the remaining states among the first $M$ ones also holds, since all zero columns have been collected. Again, by abuse of notations, we shall use the same set of notations for the matrix thus rearranged. If the size of the first block $d_1=M$, we are done. Otherwise, assume that we have obtained $m\ge 1$ blocks of sizes $\{d_k\}_{1\le k\le m}$, which, except the first one, satisfy the condition~\eqref{def: incr zero} in Definition~\ref{def: incr} and for the remaining $M-(d_1+\cdots+d_m)$ states, the columns have at least one of the entries, indexed more than $d_1+\cdots+d_{m-1}$, as nonzero. Also, the rearranged matrix is block upper triangular. As before, by an abuse, we retain the notations. If $d_1+\cdots+d_m=M$, we are done.

Otherwise, we consider the remaining $M-(d_1+\cdots+d_m)$ states among the first $M$ ones. The columns corresponding to all these states have at least one entry with index more than $d_1+\cdots+d_{m-1}$ as nonzero, by the induction hypothesis. Thus, for any column having all entries with index more than $d_1+\cdots+d_m$ as zero, one of the entries indexed $d_1+\cdots+d_{m-1}+1$ through $d_1+\cdots+d_m$ must be nonzero. The $(d_1+\cdots+d_m+1)$-th column satisfies this condition and thus the set of the states satisfying this condition is nonempty. We collect all the states satisfying this condition, that is, with the corresponding columns having entries with index more than $d_1+\cdots+d_m$ as zero and bring them forward, after the $m$-th block, ahead of the rest, to form the $(m+1)$-th block, as well as the cluster. The order of the states within the block is again not important. Since the columns have all entries with index more than $d_1+\ldots+d_m$ zero, the block upper triangular structure is retained. Also, after rearrangement, this block satisfies the condition~\eqref{def: incr zero} in Definition~\ref{def: incr}, as, corresponding to each state, the column has at least one entry corresponding to $m$-th block, indexed $d_1+\cdots+d_{m-1}+1$ through $d_1+\cdots+d_m$, as nonzero. Further, since we have collected all the states with the corresponding columns having entries with index more than $d_1+\cdots+d_m$ as zero, the remaining states, if any, should have at least one of the entries with indices more than $d_1+\cdots+d_m$ as nonzero. This completes the induction step.
\end{proof}

Thus, for the rest of this article, we shall, without loss of generality, assume that the balanced replacement matrix $\bR$ with all entries nonnegative has diagonal blocks zero or irreducible and is in the increasing order.

\begin{rem} \label{rem: algo}
The arguments in the proof of Theorem~\ref{thm: algo} can be repeated when the replacement matrix is in the increasing order and it will be possible to identify the rate pairs more directly for the replacement matrices in the increasing order. Note that the arguments in the proof of Theorem~\ref{thm: algo} assumed that $\bQ_k$ is the scalar zero, whenever $\mu_k=0$, whereas the increasing order allows some of the leading zero diagonal blocks to be of higher dimension. However, we can break the leading zero diagonal blocks into the blocks of single colors and obtain the rate pairs separately. It is then easy to see that, if the $k$-th block is in the $j$-th cluster, that is, $i_j\le k<i_{j+1}$ for some $j=1,\ldots,J$ or $k=i_j$ for $j=J+1$, then $\alpha_k$ becomes the leading character of the cluster $\lambda_j$. Further, if the leading character of the cluster $\lambda_j$ or its order $\kappa_j$ is zero, then $\beta_k$ is the order of the leading character of the cluster, $\kappa_j$. It is not possible to identify $\beta_k$ in such a simple form, if both $\lambda_j$ and $\kappa_j$ are positive and we need to make the further assumption~\eqref{assmp}, which we are going to make in the next paragraph, to complete the identification. We obtain the complete identification of the rates and the limits in Theorem~\ref{thm: main} under the assumption~\eqref{assmp}. Yet, it would be important to note that much of the identification of the rate pairs for the replacement matrix in the increasing order in simple closed form is possible even without the assumption~\eqref{assmp}.
\end{rem}

To prove Theorem~\ref{thm: main}, as in \cite{bose:dasgupta:maulik:2009b}, we need to extend the condition~\eqref{def: incr nonlead} of Definition~\ref{def: incr} to the leading block of the $(j+1)$-th cluster, if the order of the leading character of the cluster is positive, that is $\kappa_{j+1}>0$, or equivalently $\lambda_j=\lambda_{j+1}$. In particular, we make the following assumption on a balanced, block upper triangular replacement matrix $\bR$ in the increasing order, formed by nonnegative entries, with blocks $\{\bQ_{ml}\}_{1\le m,l\le K+1}$, indices of the running maxima of the characters $\{i_j\}_{1\le j\le J+1}$, the leading characters $\{\lambda_j\}_{1\le j\le J+1}$ and their orders $\{\kappa_j\}_{1\le j\le J+1}$:
\let\myenumi\theenumi
\renewcommand{\theenumi}{\Alph{enumi}}
\begin{enumerate}
\item Whenever $\lambda_j>0$ and $\kappa_j>0$, we have $\sum_{m=i_{j-1}}^{i_j-1} \bQ_{mi_j} \neq \bzero$. \label{assmp}
\end{enumerate}
\renewcommand{\theenumi}{\myenumi}
This extension may not be possible in general. As a counterexample, consider the following upper triangular replacement matrix
$$\bR =
\begin{pmatrix}
0.5 & 0 & 0.5\\
0 & 0.5 & 0.5\\
0 & 0 & 1
\end{pmatrix}.$$
Note that we do not make the assumption~\eqref{assmp} when $\lambda_j=0$, as, by Lemma~\ref{lem: rearrange2}, this extension is possible if the character $0$ is repeated.

To reiterate, we shall assume, for the rest of the article, that the balanced replacement matrix $\bR$, with all entries nonnegative, has diagonal blocks zero or irreducible and is in the increasing order and further that the assumption~\eqref{assmp} holds. The assumption is made for the leading block of the $j$-th cluster for $j=1,\ldots,J+1$, whenever the leading character of the cluster is positive and the order of that leading character is also positive. Only~\eqref{assmp} has to be assumed, while the rest of the form can be obtained as a reduction from a general balanced replacement matrix with nonnegative entries using Lemmas~\ref{lem: rearrange} and~\ref{lem: rearrange2}.

For ease of understanding, we present below the $j$-th cluster, as a typical one, containing blocks with indices $i_j, i_j +1 , \ldots, i_{j+1}-1$ (note that the block with index $i_{j+1}$ actually goes to the next cluster):
$$\begin{pmatrix}
\bQ_{i_j} & \bQ_{i_j, i_j+1} & \cdots & \bQ_{i_j, k} & \cdots & \bQ_{i_j, i_{j+1}-1} \\
\bzero & \bQ_{i_j+1} & \cdots & \bQ_{i_j + 1, k} & \cdots & \bQ_{i_j+1, i_{j+1}-1} \\
\vdots & \ddots & \ddots & & \vdots & \vdots \\
\bzero & \cdots & \bzero & \bQ_k & \cdots & \bQ_{k, i_{j+1}-1} \\
\vdots & \vdots & \vdots & \ddots & \ddots & \vdots \\
\bzero & \cdots & \cdots & \cdots & \bzero & \bQ_{i_{j+1} - 1}
\end{pmatrix}.$$
The characters $\lambda_j = \mu_{i_j}, \mu_k, \lambda_{j+1} = \mu_{i_{j+1}}$ of the diagonal blocks $\bQ_{i_j}, \bQ_k, \bQ_{i_{j+1}}$ respectively, satisfy $\mu_k < \lambda_j \le \lambda_{j+1}$, for $i_j < k < i_{j+1}$. The index $\kappa_j$ counts number of times $\lambda_j$ has occurred as a character before $i_j$-th diagonal block.

Note that all the blocks have \PF eigenvalue less than or equal to $1$. As observed earlier, the last block is balanced with row sum $1$ and has the \PF eigenvalue, and thus the character, $1$ and hence forms the last cluster. If possible, suppose some earlier block has the character, and thus the \PF eigenvalue, $1$, then it should also be balanced with row sum $1$. Thus it will be a leading block and all other blocks in the same row will be zero submatrices. Hence, by the condition~\eqref{def: incr nonlead} of Definition~\ref{def: incr}, the immediately succeeding diagonal block cannot have the character less than $1$. So the next block will also have the character $1$ and will be a leading block and now the assumption~\eqref{assmp} will be violated for two successive leading blocks with the same leading character. Thus, for a balanced block triangular matrix with all entries nonnegative, which is in the increasing order and satisfies the assumption~\eqref{assmp}, except for the last block, no other block will have the character, and thus the \PF eigenvalue, $1$. Also, the last block will form the last cluster in itself.

When $\bQ_k$ is irreducible, the left and right eigenvectors of $\bQ_k$ corresponding to the \PF eigenvalue $\mu_k$ will be denoted as $\bpi^{(k)}$ and $\bzeta^{(k)}$, which are normalized so that $\bpi^{(k)} \bone = 1 = \bpi^{(k)} \bzeta^{(k)}$. Then, $\bpi^{(k)}$ and $\bzeta^{(k)}$ have all entries positive.

We now describe the main result to be presented in Theorem~\ref{thm: main}. Recall that the clusters with zero as the leading character are formed by a single block and are at the very beginning and the cluster with one as the leading character is the last one, that is, $(J+1)$-th one and is formed by the last, that is, $(K+1)$-th block. If the leading character of the $j$-th cluster, $\lambda_j=0$, then
$$\frac1{\log^{\kappa_j} N} \bC_N^{(i_j)} \to \frac1{j!} \bC_0^{(1)} \bQ_{12} \bQ_{23} \cdots \bQ_{i_j-1,i_j} \text{ almost surely and in $L^2$.}$$
If $i_j=1$, the continued product of the matrices above is interpreted as the identity matrix and as $\bC_0^{(1)}$ is assumed to be nonzero, by the condition~\eqref{def: incr zero} of Definition~\ref{def: incr}, we have the limit as a nonzero constant vector for all the clusters, if any, with zero leading character. For the last cluster, which contains only the (K+1)-th block, the scale is $N$ and the limit random vector is $\bpi^{(K+1)}$. In the $j$-th cluster, with the leading character $\lambda_j\in(0,1)$, corresponding to the leading block, we have a non-degenerate random variable $V_j$ such that
$$\frac1{N^{\lambda_j} \log^{\kappa_j} N} \bC_N^{(i_j)} \to V_j \bpi^{(i_j)} \text{ almost surely and in $L^2$.}$$
Recall that, since the leading character $\lambda_j$ is nonzero, the submatrix $\bQ_{i_j}$ is nonzero and irreducible and has the \PF eigenvalue $\lambda_j$ and the corresponding left eigenvector $\bpi^{(i_j)}$ normalized so that the sum of the entries is one. For other blocks in the $j$-th cluster, we use the same scaling and the limit random vector is again constant vector multiple of $V_j$, where the constant vector is obtained by multiplying $\bpi^{(i_j)}$ on right by a constant matrix. Further, if $\kappa_j > 0$, then $V_j$ is a constant non-zero multiple of $V_{j-1}$. The matrix and the scalar multiples have been defined in~\eqref{def: nonlead w} and~\eqref{def: w} respectively. Thus, within a cluster, the scale remains same. Further, in the clusters where the leading characters are same, the scales change by powers of $\log N$, but the limiting random vectors continue to be constant vector multiples of one scalar random variable. The random variable is degenerate, if the leading character is zero or one. Thus, the number of random variables involved in the limit equals the number of distinct leading characters $\lambda_j$ in the open interval $(0,1)$.

In the triangular case discussed in \cite{bose:dasgupta:maulik:2009b}, the scaled count of the leading color of a block converged to a random variable. In the block triangular case, we analogously consider the scaled color count subvector corresponding to the leading block of a cluster. Here the limiting vector still has one dimensional randomness, as it is a constant vector multiple of a scalar valued random variable. Further, in the triangular case, the scaled count subvector for a block converged to a vector which is a constant vector multiple of the limit random variable corresponding to the leading color. Analogously, the scaled count vector for a cluster in the block triangular case converges to a vector which is a constant matrix multiple of the limit vector corresponding to the leading block. Moreover, in the triangular case, if the diagonal entries corresponding to two successive leading colors are same, then the corresponding limiting random variables are multiples of each other. However, in the block triangular case, the limit vectors corresponding to two successive leading blocks with the same \PF eigenvalues are not scalar or matrix multiples of each other. Yet, the scalar random variables associated with the limit vectors are multiples of each other.
\end{section}

\begin{section}{Identification of the limits} \label{sec: main}
For the purpose of this section, the replacement matrix $\bR$ has nonnegative entries, is balanced, block upper triangular, in the increasing order and satisfies the assumption~\eqref{assmp}. The notations for the blocks, the characters, the leading characters and their orders, the eigenvalues and the eigenvectors remain the same.

To identify the limits, we need to define a sequence of matrices $\{\bW_k\}_{k=1}^{K+1}$ corresponding to the blocks and another sequence of constants $\{w_j\}_{j=1}^{J+1}$ corresponding to the clusters. Actually, these matrices and constants are useful only when the corresponding characters and leading characters are positive, but we define them in all the cases. We define them inductively. We first define the matrices $\bW_k$'s. For the definition of $\{\bW_k\}$, we only require the replacement matrix $\bR$ to be in the increasing order but we do not need to assume~\eqref{assmp}, as follows:
\begin{enumerate}
\item If $k$ corresponds to a leading block, that is, $k=i_j$ for some $j=1,\ldots,J+1$ (this includes the cases $k=1$ and $k=K+1$), define $\bW_k=\bI$, the identity matrix of order $d_k$, where, recall that $d_k$ is the number of colors in the $k$-th block.
\item If $k$ corresponds to a non-leading block, that is, $i_j < k < i_{j+1}$ for some $j=1,\ldots,J$, define
\begin{equation} \label{def: nonlead w}
\bW_k = \sum_{m=i_{j}}^{k-1} \bW_m \bQ_{mk} \left( \lambda_j \bI - \bQ_k \right)^{-1},
\end{equation}
a matrix of order $d_{i_j} \times d_k$.
\end{enumerate}
Note that, for $i_j < k < i_{j+1}$, $\mu_k$, the character, and hence the \PF eigenvalue, of $\bQ_k$ satisfies $\mu_k < \lambda_j$. Hence the absolute values of all the eigenvalues of $\bQ_k$, which are smaller than or equal to $\mu_k$, are strictly smaller than $\lambda_j$, making $(\lambda_j \bI - \bQ_k)$ invertible. Further,
$$\left( \lambda_j \bI - \bQ_k \right)^{-1} = \frac1{\lambda_j} \left[ \bI + \sum_{i=1}^\infty \left( \frac1\lambda_j \bQ_k \right)^i \right],$$ where the sum on the right side converges. Observe that all the matrices in the summation on the right side have all entries nonnegative. Further, if $\mu_k>0$, or equivalently, $\bQ_k$ is irreducible, for each element of the matrix on the left side, some power of $\bQ_k$ has the corresponding element strictly positive. Thus, $( \lambda_j \bI - \bQ_k )^{-1}$ has all elements strictly positive, whenever $\mu_k>0$. If $\mu_k=0$, or equivalently, $\bQ_k$ is the scalar zero, then $(\lambda_j\bI-\bQ_k)^{-1}=1/\lambda_j$ is a finite positive number too, as $\lambda_j>0$.

The constants $w_j$'s are defined, when the replacement matrix $\bR$ is in the increasing order and satisfies the assumption~\eqref{assmp}, as follows:
\begin{equation} \label{def: w}
w_j =
\begin{cases}
  {\displaystyle 1}, &\text{if $\lambda_j = 0$ or $\kappa_j = 0$,}\\
  {\displaystyle \frac1{\kappa_j} \bpi^{(i_{j-1})} \sum_{m=i_{j-1}}^{i_j-1} \bW_m \bQ_{mi_j} \bzeta^{(i_j)}}, &\text{otherwise.}
\end{cases}
\end{equation}
Note that if $\kappa_j>0$ and $\lambda_j>0$, then $\lambda_j=\lambda_{j-1}>0$ as well. Thus, $\bQ_{i_{j-1}}$ and $\bQ_{i_j}$ are both irreducible and it is meaningful to define the left and the right eigenvectors $\bpi^{(i_{j-1})}$ and $\bzeta^{(i_j)}$ respectively corresponding to the respective \PF eigenvalues $\lambda_{j-1}$ and $\lambda_j$.

Next proposition shows that right multiplication of a vector with all coordinates positive by $\bW_k$ keeps all coordinates of the resultant vector positive. This is important to show that the limit random vector obtained by scaling the count vectors have all coordinates non-degenerate.

\begin{prop} \label{prop: w}
Let $\bpi$ be a vector with all coordinates strictly positive and the replacement matrix $\bR$ be in the increasing order. Then, for all $1\le k\le K+1$, the vector $\bpi \bW_k$ has all coordinates strictly positive.
\end{prop}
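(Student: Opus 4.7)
The plan is to proceed by induction on the block index $k$, handling each cluster separately. Within the $j$-th cluster $\{i_j,i_j+1,\ldots,i_{j+1}-1\}$, the recursion~\eqref{def: nonlead w} expresses $\bW_k$ in terms of $\bW_m$ for $m\in\{i_j,\ldots,k-1\}$ only, so induction works cleanly. The base case is the leading block $k=i_j$, where $\bW_{i_j}=\bI$ and hence $\bpi\bW_{i_j}=\bpi$ has all coordinates strictly positive by hypothesis.

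For the inductive step, fix a non-leading index $k$ with $i_j<k<i_{j+1}$ and assume $\bpi\bW_m$ has all coordinates strictly positive for every $m\in\{i_j,\ldots,k-1\}$. Write
$$\bpi\bW_k = \bu\,(\lambda_j\bI-\bQ_k)^{-1}, \qquad \bu := \sum_{m=i_j}^{k-1} (\bpi\bW_m)\,\bQ_{mk}.$$
Since each $\bpi\bW_m$ has strictly positive entries and each $\bQ_{mk}$ has nonnegative entries, $\bu$ is a nonnegative row vector. Moreover, condition~\eqref{def: incr nonlead} of the increasing order guarantees $\sum_{m=i_j}^{k-1}\bQ_{mk}\neq\bzero$, so some entry, say $(\bQ_{m_0k})_{i_0 l_0}$, is strictly positive for some $m_0\in\{i_j,\ldots,k-1\}$ and indices $i_0,l_0$. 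The $l_0$-th coordinate of $\bu$ is then at least $(\bpi\bW_{m_0})_{i_0}(\bQ_{m_0k})_{i_0l_0}>0$, so $\bu$ is nonnegative with at least one strictly positive entry.

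It remains to show that right multiplication by $(\lambda_j\bI-\bQ_k)^{-1}$ sends such a vector to a strictly positive vector. If $\mu_k>0$, then $\bQ_k$ is irreducible, and the discussion immediately following the definition of $\bW_k$ already established that $(\lambda_j\bI-\bQ_k)^{-1}$ has all entries strictly positive; hence a nonzero nonnegative row vector multiplied by a strictly positive matrix is coordinatewise strictly positive. If $\mu_k=0$, then, because $k$ is non-leading, condition~\eqref{def: incr scalar} of the increasing order forces $\bQ_k$ to be the scalar zero, whence $d_k=1$, the vector $\bu$ reduces to a positive scalar, and $(\lambda_j\bI-\bQ_k)^{-1}=1/\lambda_j>0$, so the product is positive. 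This closes the induction.

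I expect no serious obstacle: the proof is essentially bookkeeping once one recognizes that the two structural conditions~\eqref{def: incr scalar} and~\eqref{def: incr nonlead} of the increasing order were set up precisely to propagate positivity through the recursion~\eqref{def: nonlead w}. The only mildly delicate point is to separate the cases $\mu_k>0$ and $\mu_k=0$, so as to invoke the correct positivity statement about $(\lambda_j\bI-\bQ_k)^{-1}$ in each case.
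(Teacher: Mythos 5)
Your proof is correct and follows essentially the same inductive argument as the paper, splitting into the cases $\mu_k>0$ (irreducible $\bQ_k$, strictly positive inverse) and $\mu_k=0$ (scalar zero non-leading block) and invoking condition~\eqref{def: incr nonlead} to produce at least one nonzero $\bQ_{mk}$. The only cosmetic difference is that you first isolate the intermediate vector $\bu$ and then argue about right multiplication, whereas the paper singles out one strictly positive summand directly; the underlying reasoning is identical.
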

\begin{proof}
The proof is done through induction. For $k=1=i_1$, we have $\bW_1 = \bI$ and hence $\bpi \bW_1 = \bpi$ has all coordinates strictly positive. Then, assume $\bpi \bW_m>0$ for all $m<k$. We consider two cases of $k$ separately.

For $k=i_j$ for some $j=1,\ldots,J+1$, by definition $\bW_{i_j}=\bI$ and $\bpi \bW_{i_j} = \bpi$ has all coordinates strictly positive.

Next consider $k$ such that $i_j < k < i_{j+1}$ for some $j=1,\ldots,J$. Then, by~\eqref{def: nonlead w},
$$\bpi \bW_k = \sum_{m=i_{j}}^{k-1} \bpi \bW_m \bQ_{mk} \left( \lambda_j \bI - \bQ_k \right)^{-1}.$$
By induction hypothesis, for all $i_j\le m<k$, $\bpi \bW_m$ have all coordinates strictly positive. Next, we consider two subcases separately according as $\mu_k$ is zero or positive.

First, we assume $\mu_k=0$. Since we are considering a non-leading block, the diagonal block must be the scalar zero. Then $\bpi \bW_k = \lambda_j^{-1} \sum_{m=i_j}^{k-1} \bpi \bW_m \bQ_{mk}$, where $\bQ_{mk}$ are $d_m$-dimensional column vectors. Since, $\bpi \bW_m$ have all coordinates positive, for all $i_j\le m<k$, and by the condition~\eqref{def: incr nonlead} in Definition~\ref{def: incr}, at least one of the column vectors $\bQ_{mk}$ with $i_j\le m<k$ must be nonzero, we must have $\bpi \bW_k$ is a nonzero scalar.

Next, we assume $\bQ_k$ is irreducible and then we have observed that $( \lambda_j \bI - \bQ_k )^{-1}$ has all elements strictly positive. Also, by the condition~\eqref{def: incr nonlead} of Definition~\ref{def: incr}, we have, for some $i_j \le m < k$, $\bQ_{mk}$ must have at least one element strictly positive. Thus, for that $m$, $\bpi \bW_m \bQ_{mk} ( \lambda_j \bI - \bQ_k )^{-1}$ will have all coordinates strictly positive. All other summands on the right side have all coordinates nonnegative. Thus $\bpi \bW_k$ has all coordinates positive.

This completes the induction step.
\end{proof}

We next show that the constants $w_j$'s are positive. This will show that for a cluster with the \PF eigenvalue of the leading block same as that of a previous cluster, the limit random vector obtained from the scaled color counts is non-degenerate.

\begin{cor} \label{cor: w}
Let the replacement matrix $\bR$ be in the increasing order and satisfy the assumption~\eqref{assmp}. Then, for all $1\le j\le J+1$, we have $w_j > 0$.
\end{cor}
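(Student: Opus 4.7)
The plan is to split into the two cases of the definition~\eqref{def: w}. The first case, where $\lambda_j = 0$ or $\kappa_j = 0$, is immediate since $w_j = 1 > 0$ by definition. So the real work is in the remaining case, namely $\lambda_j > 0$ and $\kappa_j > 0$.

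In this case, since $\kappa_j > 0$ records the number of earlier occurrences of the leading character $\lambda_j$ among the leading characters, we must have $\lambda_{j-1} = \lambda_j > 0$. Therefore both diagonal blocks $\bQ_{i_{j-1}}$ and $\bQ_{i_j}$ are irreducible, so the left \PF eigenvector $\bpi^{(i_{j-1})}$ and right \PF eigenvector $\bzeta^{(i_j)}$ are well-defined and have all coordinates strictly positive.

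The strategy is then to analyze the scalar
$$\bpi^{(i_{j-1})} \sum_{m=i_{j-1}}^{i_j-1} \bW_m \bQ_{mi_j} \bzeta^{(i_j)}$$
term by term. First, apply Proposition~\ref{prop: w} to $\bpi = \bpi^{(i_{j-1})}$: this gives that each row vector $\bpi^{(i_{j-1})} \bW_m$ has all coordinates strictly positive for $i_{j-1} \le m < i_j$. Since each $\bQ_{m i_j}$ has nonnegative entries and $\bzeta^{(i_j)}$ is a strictly positive column vector, every summand is a nonnegative real number. Hence the whole sum is $\ge 0$.

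To upgrade this to strict positivity, I invoke the assumption~\eqref{assmp}, which precisely asserts, under our hypotheses $\lambda_j > 0$ and $\kappa_j > 0$, that $\sum_{m=i_{j-1}}^{i_j-1} \bQ_{mi_j} \neq \bzero$. Thus there exists some $m_0 \in \{i_{j-1},\ldots,i_j-1\}$ and indices $(r,s)$ with $(\bQ_{m_0 i_j})_{rs} > 0$. The corresponding summand expands as $\sum_{r,s} (\bpi^{(i_{j-1})} \bW_{m_0})_r (\bQ_{m_0 i_j})_{rs} (\bzeta^{(i_j)})_s$, and picking out the $(r,s)$ term gives a strictly positive contribution since all three factors are positive. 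All other summands are nonnegative, so the total sum is strictly positive. Dividing by $\kappa_j > 0$ gives $w_j > 0$. The main (mild) obstacle is just keeping track of which case of assumption~\eqref{assmp} applies and verifying that the Perron--Frobenius eigenvectors $\bpi^{(i_{j-1})}$ and $\bzeta^{(i_j)}$ are defined in exactly the case where the non-trivial formula for $w_j$ is invoked; once that bookkeeping is clear, the proof is a direct application of Proposition~\ref{prop: w} and assumption~\eqref{assmp}.
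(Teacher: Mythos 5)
Your proof is correct and takes essentially the same approach as the paper: the same case split on $\lambda_j, \kappa_j$, the same application of Proposition~\ref{prop: w} to $\bpi^{(i_{j-1})}$, and the same use of assumption~\eqref{assmp} to produce a strictly positive summand. The only difference is a slightly more explicit index-level expansion of the nonzero term, which is just a fuller spelling-out of the same argument.
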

\begin{proof}
If $\kappa_j=0$ or $\lambda_j=0$, then $w_j=1$ and the result holds. Thus, without loss of generality, we can assume $\kappa_j > 0$ and $\lambda_j>0$. Since $\bpi^{(i_{j-1})}$ is a left \PF eigenvector of an irreducible matrix $\bQ_{i_{j-1}}$, it has all coordinates positive. So, by Proposition~\ref{prop: w}, for all $i_{j-1} \le m < i_j$, $\bpi^{(i_{j-1})} \bW_m$ has all coordinates positive. Since $\bzeta^{(i_j)}$ is the right \PF eigenvector of an irreducible matrix $\bQ_{i_j}$, it again has all coordinates positive. Since $\kappa_j>0$, by the assumption~\eqref{assmp}, we have, for some $i_{j-1} \le m < i_j$, $\bQ_{mi_j}$ must have at least one element strictly positive. Thus, for that $m$, $\bpi^{(i_{j-1})} \bW_m \bQ_{mi_j} \bzeta^{(i_j)} > 0$. Also, for $i_{j-1} \le m < i_j$, $\bQ_{mi_j}$ have all elements nonnegative. Hence, $w_j > 0$.

This completes the induction step.
\end{proof}

Now, we are ready to identify the limits.
\begin{thm} \label{thm: main}
Consider an urn model with a balanced, block upper triangular replacement matrix $\bR$, formed by nonnegative entries, having $K+1$ blocks, which is in the increasing order with $J+1$ clusters having the leading characters $\{\lambda_j\}_{1\le j\le J+1}$ and their orders $\{\kappa_j\}_{1\le j\le J+1}$. We further assume that $\bR$ satisfies the assumption~\eqref{assmp}. Then the color count subvector corresponding to the $k$-th block, for $k=1, \ldots, K$, satisfying $i_j \le k < i_{j+1}$, for some $j=1, \ldots, J$, we have,
$$\frac1{N^{\lambda_j} \log^{\kappa_j} N} {\bC_N^{(k)}} \to
\begin{cases}
{\displaystyle \frac1{\kappa_j!} \bC_0^{(1)} \prod_{m=1}^{i_{j-1}} \bQ_{m,m+1}}, &\text{if $\lambda_j=0$}\\
{\displaystyle V_j \bpi^{(i_j)} \bW_k}, &\text{if $\lambda_j>0$}
\end{cases}$$
almost surely and in $L^2$, where $V_j$ is a nondegenerate random variable for $j\le J$. For $\lambda_1=0$, the continued product in the limit above is interpreted as $\bI$.

Furthermore, for the random variables $V_j$, $j=1, \ldots, J$, if $\kappa_j>0$, we also have
\begin{equation} \label{eq: v recursion}
V_j = w_j V_{j-1}.
\end{equation}

Finally, for the last count subvector we have
$$\frac1N \bC_N^{(K+1)} \to \bpi^{(K+1)} \quad \text{almost surely and in $L^2$.}$$
\end{thm}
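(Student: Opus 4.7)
The plan is to induct on the cluster index $j=1,2,\ldots,J+1$, establishing within each step first the limit for the leading block $k=i_j$ and then, by a secondary induction, the limits for the non-leading blocks $k=i_j+1,\ldots,i_{j+1}-1$. All the asymptotic machinery needed has already been developed in Theorem~\ref{thm: algo}; the task here is to exploit the explicit formulas~\eqref{eq: lim zero},~\eqref{eq: lim positive} and~\eqref{eq: lim blk k} to identify the limits in closed form and verify the recursion~\eqref{eq: v recursion}. The base case $j=1$ is immediate: when $\lambda_1=0$ the first block is scalar zero with $\bC_N^{(1)}=\bC_0^{(1)}$ constant, matching the formula (empty product interpreted as $\bI$, $\kappa_1=0$); when $\lambda_1>0$, the argument giving~\eqref{eq: lim one} already provides $\bC_N^{(1)}/N^{\lambda_1}\to Y_1\bpi^{(1)}$, so I set $V_1:=Y_1$.

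For the leading block of cluster $j$ I would split into three subcases. If $\lambda_j=0$ then clusters $1,\ldots,j$ each consist of one scalar zero block, $i_j=j$ and $\kappa_j=j-1$. Condition~\eqref{def: incr zero} of Definition~\ref{def: incr} gives $\bQ_{j-1,j}\neq\bzero$, and since only cluster $j-1$ achieves the maximal rate pair $(0,j-2)$, formula~\eqref{eq: lim zero} combined with the induction hypothesis yields $\bC_N^{(j)}/\log^{j-1}N\to\tfrac{1}{(j-1)!}\bC_0^{(1)}\bQ_{1,2}\cdots\bQ_{j-1,j}$. If $\lambda_j>0$ and $\kappa_j=0$, then $\lambda_j$ strictly exceeds every earlier rate, so the $\mu_k>\alpha$ branch of~\eqref{eq: lim blk k} produces a fresh nondegenerate $Y_{i_j}$ with $\bC_N^{(i_j)}/N^{\lambda_j}\to Y_{i_j}\bpi^{(i_j)}$, and I set $V_j:=Y_{i_j}$. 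If $\lambda_j>0$ and $\kappa_j>0$, then $\lambda_{j-1}=\lambda_j$ and $\kappa_j=\kappa_{j-1}+1$; assumption~\eqref{assmp} guarantees $\sum_{m=i_{j-1}}^{i_j-1}\bQ_{m,i_j}\neq\bzero$. Applying~\eqref{eq: lim positive} with $\alpha=\mu_{i_j}=\lambda_j$ and $\beta=\kappa_{j-1}$, the integrand factor collapses to $\log^{\kappa_j-1}(n+2)/(n+1)$, whose partial sum is asymptotic to $\log^{\kappa_j}N/\kappa_j$. Only blocks in cluster $j-1$ contribute at leading order (earlier clusters have strictly smaller rate pairs and drop out), and by the induction hypothesis $\bu_m=V_{j-1}\bpi^{(i_{j-1})}\bW_m$ for $i_{j-1}\le m<i_j$, so
\[ \frac{\bC_N^{(i_j)}\bzeta^{(i_j)}}{N^{\lambda_j}\log^{\kappa_j}N}\to\frac{1}{\kappa_j}V_{j-1}\bpi^{(i_{j-1})}\sum_{m=i_{j-1}}^{i_j-1}\bW_m\bQ_{m,i_j}\bzeta^{(i_j)}=w_j V_{j-1}, \]
by~\eqref{def: w}. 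I define $V_j:=w_j V_{j-1}$, which is nondegenerate by Corollary~\ref{cor: w}, and the Jordan analysis of~\eqref{eq: lim blk k} (the $\mu_k\ge\alpha$ branch) then promotes this scalar limit to the full vector limit $V_j\bpi^{(i_j)}$ for $\bC_N^{(i_j)}/(N^{\lambda_j}\log^{\kappa_j}N)$.

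For a non-leading block $k$ in cluster $j$ with $\lambda_j>0$ we have $\mu_k<\lambda_j=\alpha$, so the second branch of~\eqref{eq: lim blk k} applies. Blocks from earlier clusters contribute zero, while by the secondary induction within the cluster $\bu_m=V_j\bpi^{(i_j)}\bW_m$ for $i_j\le m<k$; hence
\[ \lim_{N\to\infty}\frac{\bC_N^{(k)}}{N^{\lambda_j}\log^{\kappa_j}N}=V_j\bpi^{(i_j)}\sum_{m=i_j}^{k-1}\bW_m\bQ_{mk}(\lambda_j\bI-\bQ_k)^{-1}=V_j\bpi^{(i_j)}\bW_k \]
by~\eqref{def: nonlead w}. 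For the last block, the same mechanism (with $\lambda_{J+1}=1$, $\kappa_{J+1}=0$) gives $\bC_N^{(K+1)}/N\to V_{J+1}\bpi^{(K+1)}$ for some random variable $V_{J+1}$. Since $\bR$ is balanced, $\bC_N\bone=N+1$ for all $N$, and the remaining blocks all grow at rates strictly smaller than $N$; hence $\bC_N^{(K+1)}\bone/N\to 1$, forcing $V_{J+1}\bpi^{(K+1)}\bone=1$, and the normalization $\bpi^{(K+1)}\bone=1$ yields $V_{J+1}=1$.

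The main difficulty lies in the $\kappa_j>0$ case of the leading-block step: the asymptotic evaluation of~\eqref{eq: lim positive} must correctly extract the $1/\kappa_j$ factor via a Stolz--Ces\`aro estimate on $\sum_n\log^{\kappa_j-1}(n+2)/(n+1)$, and one must argue carefully that contributions from clusters earlier than $j-1$ truly are of lower order under the chosen normalization. Assumption~\eqref{assmp} is essential precisely at this point, since without it the coefficient $w_j$ in~\eqref{def: w} could vanish, breaking both the identification of $V_j$ via the recursion~\eqref{eq: v recursion} and the propagation of nondegeneracy through Corollary~\ref{cor: w}.
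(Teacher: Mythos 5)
Your proposal follows the paper's own proof almost step for step: induction guided by Theorem~\ref{thm: algo}, the same split into leading blocks with $\kappa_j=0$, leading blocks with $\kappa_j>0$, and non-leading blocks, the same use of~\eqref{eq: lim zero},~\eqref{eq: lim positive} and~\eqref{eq: lim blk k} to read off the limits, and the same Ces\`aro-type evaluation of $\sum_n\log^{\kappa_j-1}(n+2)/(n+1)\sim\log^{\kappa_j}N/\kappa_j$ to extract the $1/\kappa_j$ factor that matches~\eqref{def: w}. The one genuine departure is the last block: the paper simply invokes Proposition~4.3 of \cite{gouet:1997} for $\bC_N^{(K+1)}/N\to\bpi^{(K+1)}$, whereas you rederive it from the same martingale machinery together with the balancing identity $\bC_N\bone=N+1$ and the observation (which the paper also records as a consequence of~\eqref{assmp}) that all earlier blocks grow at rate $o(N)$, forcing the scalar limit to be $1$ via the normalization $\bpi^{(K+1)}\bone=1$. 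This is correct, self-contained and a small improvement over an external citation.

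One technical gap worth flagging: for a non-leading block $k$ of a cluster with $\lambda_j>0$, you invoke the second branch of~\eqref{eq: lim blk k}, but that display was established in the proof of Theorem~\ref{thm: algo} only when $\mu_k>0$. If $\mu_k=0$ (which, by condition~(\ref{def: incr scalar}) of Definition~\ref{def: incr}, means the diagonal block is scalar zero), you must instead apply the second alternative in~\eqref{eq: lim zero}; the algebra then still reduces to $V_j\bpi^{(i_j)}\bW_k$ via~\eqref{def: nonlead w} with $(\lambda_j\bI-\bQ_k)^{-1}=\lambda_j^{-1}$, so the conclusion is unchanged, but the correct display needs to be cited. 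The paper's proof makes exactly this distinction, so this is a minor omission rather than a flaw in the strategy.
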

\begin{proof}
Note that the result for the $(K+1)$-th block is in \cite{gouet:1997}. The rest we prove by induction on $k$. Much of the argument has already been completed during the proof of Theorem~\ref{thm: algo}. We shall now use the special structure of the replacement matrix in the increasing order and the assumption~\eqref{assmp} to identify the limits. The rate pair $(\alpha,\beta)$ will mean same as in Theorem~\ref{thm: algo}.

For $k=1$, we are in the leading block of the first cluster. If $\lambda_1=\mu_1=0$, then clearly $\bC_N^{(1)}=\bC_0^{(1)}$ and we have the required limit.  For $\lambda_1=\mu_1>0$, we use the argument given in the proof of Theorem~\ref{thm: algo} and by choosing $\bzeta=\bzeta^{(1)}$ as the normalized right eigenvector, we obtain from~\eqref{eq: lim one}, $\bC_N^{(1)}/N^{\mu_1}\to V_1\bpi^{(1)}$ almost surely and in $L^2$, for the nondegenerate random variable $V_1=Y_1$, as required.

Next, we assume that the result holds for all blocks till $(k-1)$-th block and we study the $k$-th block. We first identify the rate pairs as obtained in Theorem~\ref{thm: algo} using the fact that the replacement matrix is in the increasing order and the assumption~\eqref{assmp} has been made. Recall that an intermediate rate pair $(\alpha,\beta)$ was defined as the lexicographically largest one in the set $\{(\alpha_m,\beta_m): 1\le m\le k-1, \bQ_{mk}\neq\bzero\}$. Then, we obtained $\alpha_k=\max\{\alpha,\mu_k\}$ and $\beta_k$ equaled $0$, $\beta+1$ or $\beta$ according as $\mu_k$ is larger than, equal to or smaller than $\alpha$. We consider three cases separately, namely, $k$-th block is the leading one of its cluster with the corresponding order $0$ in the first case, positive in the second case and the block is a nonleading one of its cluster in the third case. In the first case $k=i_j$ for some $j=1,\ldots,J+1$ and $\kappa_j=0$. Thus, for all $1\le m\le k-1$, we have $\mu_m<\mu_k$ and hence for all $j^\prime=1,\ldots,j-1$, we have $\lambda_{j^\prime}<\mu_k$. So, by the induction hypothesis, for all $m=1,\ldots, k-1$, $(\alpha_m,\beta_m)$ are, and as a consequence, $(\alpha,\beta)$ is lexicographically strictly smaller than $(\mu_k,0)$. This gives $\alpha<\mu_k$ and $(\alpha_k,\beta_k) = (\mu_k,0) = (\lambda_j,\kappa_j)$. In the second case $k=i_j$ for some $j=2,\ldots,J$ and $\kappa_j>0$. (Note that $\kappa_1=0$ and the last $(J+1)$-th cluster is the only cluster with the leading character $1$ and hence $\kappa_{J+1}=0$.) Also $\mu_k=\lambda_j=\lambda_{j-1}$ and $\kappa_{j-1}+1=\kappa_j$ in this case. If $\mu_k=\lambda_j=0$, then using the condition~\eqref{def: incr zero} of Definition~\ref{def: incr}, and, if $\mu_k=\lambda_j>0$, then using the assumption~\eqref{assmp}, we have $\bQ_{mi_j}\neq\bzero$ for some $m=i_{j-1},\ldots,i_j-1$. Then, using the induction hypothesis, the (lexicographically) largest rate pair $(\alpha,\beta)$ is attained at such a value of $m$ and $(\alpha,\beta) = (\lambda_{j-1},\kappa_{j-1}) = (\lambda_j,\kappa_j-1)$. Hence $\alpha=\mu_k$ and $(\alpha_k,\beta_k)=(\lambda_j,\kappa_k)$. For the third case, $i_j<k<i_{j+1}$ for some $j=1,\ldots,J$. By the condition~\eqref{def: incr nonlead} of Definition~\ref{def: incr}, we have $\bQ_{mk}\neq\bzero$ for some $m=i_j,\ldots,k-1$. Then, using the induction hypothesis, the (lexicographically) largest rate pair $(\alpha,\beta)$ is attained at such a value of $m$ and $(\alpha,\beta) = (\lambda_j,\kappa_j)$. Since the $k$-th block is nonleading, we have $\mu_k<\lambda_j$ and hence $\alpha>\mu_k$ and $(\alpha_k,\beta_k)=(\lambda_j,\kappa_j)$. Thus, considering all the three cases, we have that the rate for the $k$-th block, which is in the $j$-th cluster, is $(\alpha_k,\beta_k)=(\lambda_j,\kappa_j)$. Also the three cases that $\alpha$ is less than, equal to or greater than $\mu_k$ are equivalent to the cases that the $k$-th block is the leading block of its cluster with the order of the leading character positive, the same with the order of the leading character zero or the $k$-th block is a nonleading block of its cluster respectively.

Next, we identify the limits by simplifying the results obtained during the proof of Theorem~\ref{thm: algo} using the fact that the replacement matrix is in the increasing order and the assumption~\eqref{assmp} has been made. First observe that the situation that $\alpha_k=-\infty$ and $\mu_k=0$ require that $\bQ_{mk}=\bzero$ for all $m=1,\ldots,k$ and hence for all $m$, as the matrix is block upper triangular. However such colors have been included in the first block by the construction in the proof of Lemma~\ref{lem: rearrange2} and this situation does not occur for $k>1$. So we can assume~\eqref{eq: nonzero} that $\bQ_{mk}\neq\bzero$ for some $m=1,\ldots,k$.

First consider the case $\mu_k=0$. We shall assume that the $k$-th block is in the $j$-th cluster, for some $j=1,\ldots,J$. As in the limit in~\eqref{eq: lim zero}, we have two subcases, namely, $\alpha=\mu_k=0$ and $\alpha>\mu_k=0$. Recall that the subcase $\alpha=\mu_k=0$ is equivalent to the fact that the $k$-th block is the leading block of the $j$-th cluster with the order of the leading character being positive. Thus the leading character is $0$ and the $k$-th block is actually the $k$-th initial cluster with zero character and then $\kappa_k=k-1>0$. Also, using the induction hypothesis and the condition~\eqref{def: incr zero} in Definition~\ref{def: incr}, only $m=k-1$ contributes to the limit in~\eqref{eq: lim zero} and it simplifies to
$$\frac1{k-1} \frac1{(k-2)!} \bC_0^{(1)} \prod_{m=1}^{i_{k-2}} \bQ_{m,m+1} \bQ_{k-1,k} = \frac1{\kappa_k!} \bC_0^{(1)} \prod_{m=1}^{i_{k-1}} \bQ_{m,m+1}.$$
The subcase $\alpha>\mu_k=0$ is equivalent to the fact that the block is a non-leading block of the $j$-th cluster and, by the condition~\eqref{def: incr scalar} of Definition~\ref{def: incr}, contains a single color. Also the diagonal block is the scalar zero, making $\lambda_j\bI-\bQ_k$ the scalar $\lambda_j$. Further, by the induction hypothesis and the condition~\eqref{def: incr nonlead} of Definition~\ref{def: incr}, only $m=i_j,\ldots, k-1$ contributes to the limit in~\eqref{eq: lim zero}. Note that some of the corresponding $\bQ_{mk}$ may be the zero matrix, but the corresponding extra terms will not affect the limit. So the limit simplifies to, using~\eqref{def: nonlead w},
$$\frac1{\lambda_j} V_j \bpi^{(i_j)} \sum_{m=i_j}^{k-1} \bW_m \bQ_{mk} = \frac1{\lambda_j} V_j \bpi^{(i_j)} \bW_k (\lambda_j\bI-\bQ_k) = V_j \bpi^{(i_j)} \bW_k.$$

Finally consider the case $\mu_k>0$. We identify the limit for the blocks with positive character from~\eqref{eq: lim blk k}. We shall consider three subcases $\mu_k>\alpha$, $\mu_k<\alpha$ and $\mu_k=\alpha$. The subcase $\mu_k>\alpha$ is equivalent to the fact that the $k$-th block is the leading block of the $j$-th cluster for some $j=1,\ldots,J+1$ with $\kappa_j=0$. Then, using $\bzeta=\bzeta^{(i_j)}$ and hence $\bpi=\bpi^{(i_j)}$ in the argument of the proof of Theorem~\ref{thm: algo}, we have the limit in the required form, where we declare $V_j=Y_{i_j}$ and use the fact that $\bW_{i_j}=\bI$. The second subcase $\mu_k<\alpha$is equivalent to the fact that the $k$-th block is a nonleading block of its cluster. Also, by the induction hypothesis, the limit vectors $\bu_m$ in~\eqref{eq: lim blk k} will be nonzero only for $m=i_j,\ldots,k-1$. If some $m=i_j,\ldots,k-1$ has $\bQ_{mk}=\bzero$, they still can be included in the sum in~\eqref{eq: lim blk k}, as they all contribute zero vectors. Then, by the induction hypothesis, the fact that $\alpha_k=\lambda_j$ and~\eqref{def: nonlead w}, the limit becomes
$$V_j \bpi^{(i_j)} \sum_{m=i_j}^{k-1} \bW_m \bQ_{mk} (\lambda_j\bI-\bQ_k)^{-1} = V_j \bpi^{(i_j)} \bW_k.$$
At the end, we consider the subcase $\mu_k=\alpha$, which is equivalent to the fact that the $k$-th block is the leading block of its cluster with the order of the leading character being positive. Thus, we have $k=i_j$ for some $j=1,\ldots,J$ and $\kappa_j>0$. As in the subcase $\mu_k>\alpha$, by considering $\bzeta=\bzeta^{(i_j)}$ and hence $\bpi=\bpi^{(i_j)}$, together with the fact that $\bW_{i_j}=\bI$ and denoting $V_j=Y_{i_j}=Y_k$, we have, from~\eqref{eq: lim blk k}, the limit as $V_j \bpi^{(i_j)} \bW_{i_j}$. So, to complete the proof, we only need to check~\eqref{eq: v recursion}, which is done by simplifying~\eqref{eq: lim positive}. Observe that for the subcase $\mu_k=\alpha$, we have $\alpha=\alpha_k=\mu_k=\lambda_j$ and $\beta_k=\beta+1=\kappa_j=\kappa_{j-1}+1$. Then, by the assumption~\eqref{assmp} and the induction hypothesis, only the terms corresponding to $m=i_{j-1},\ldots,k-1$ will contribute to the limit in~\eqref{eq: lim positive}. Also, the terms corresponding to $m=i_{j-1},\ldots,k-1$ with $\bQ_{mk}=\bzero$ can be included in the sum in the limit, as they do not contribute anything extra. Thus, recalling the facts that $\bzeta=\bzeta^{(i_j)}$ and $k=i_j$ and using~\eqref{def: w}, the limit in~\eqref{eq: lim positive} simplifies to
$$Y_k=V_j= \frac1{\kappa_j} V_{j-1} \bpi^{(i_{j-1})} \sum_{m=i_{j-1}}^{k-1} \bW_m \bQ_{mi_j} \bzeta^{(i_j)} = w_j V_{j-1}$$
and proves~\eqref{eq: v recursion}.
\end{proof}
\end{section}

\textbf{Acknowledgement:} The authors like to thank an anonymous referee for careful reading of the manuscript and suggesting improvements.

\end{document}